\newtheorem{thrm}{Theorem} 
\newtheorem{corollary}[thrm]{Corollary}
\newtheorem{lem}[thrm]{Lemma}
\newtheorem{lemma}[thrm]{Lemma}
\newtheorem{proposition}[thrm]{Proposition}
\newtheorem{remark}[thrm]{Remark}
\newtheorem{defn}[thrm]{Definition}
\newtheorem{definition}[thrm]{Definition}
\crefname{thrm}{thrm}{Theorems}
\crefname{lem}{Lemma}{Lemmas}
\crefname{cor}{Corollary}{Corollaries}
\crefname{prop}{Proposition}{Propositions}
\crefname{proposition}{Proposition}{Propositions}
\crefname{defn}{Definition}{Definitions}
\crefname{exm}{Example}{Examples}
\crefname{rem}{Remark}{Remarks}
\crefname{section}{Section}{Sections}
\crefname{equation}{\unskip}{\unskip}
\crefname{enumi}{\unskip}{\unskip}
\DeclareMathOperator{\Ann}{Ann}
\newcommand{\gen}[1]{\langle #1\rangle}
\newcommand{\LL}{\mathfrak{L}}
\begin{document}

\noindent{\Large 
Transposed Poisson structures on 
 solvable and perfect Lie algebras}\footnote{
The  work is supported by 
FCT   UIDB/00212/2020, UIDP/00212/2020, 2022.02474.PTDC;
grant FZ-202009269, Ministry of higher education, science and innovations of the Republic of Uzbekistan.
} 

	\bigskip
	
	 \bigskip

\begin{center}	
	{\bf
		Ivan Kaygorodov\footnote{CMA-UBI, Universidade da Beira Interior, Covilh\~{a}, Portugal; 
  \    kaygorodov.ivan@gmail.com}	  \&
   Abror Khudoyberdiyev\footnote{ V.I.Romanovskiy Institute of Mathematics Academy of Science of Uzbekistan; National University of Uzbekistan; \ 
khabror@mail.ru}}   
\end{center}

	\bigskip
  
\noindent {\bf Abstract.}
{\it 
We described all transposed Poisson algebra structures on 
oscillator Lie algebras, i.e., on   one-dimensional 
solvable extensions of the $(2n+1)$-dimensional Heisenberg algebra; 
 on solvable Lie algebras with naturally graded filiform
nilpotent radical;
 on $(n+1)$-dimensional solvable   extensions of the $(2n+1)$-dimensional Heisenberg algebra; 
 and on $n$-dimensional solvable   extensions of the $n$-dimensional  algebra with the trivial multiplication.
We also gave an answer to one question on transposed Poisson algebras early posted   in 
a paper by Beites, Ferreira and Kaygorodov. 
Namely, we found a finite-dimensional Lie algebra with non-trivial $\frac{1}{2}$-derivations, 
but without non-trivial transposed Poisson algebra structures.

}

\bigskip

\noindent {\bf Keywords}: 
{\it Lie algebra, transposed Poisson algebra, 
$\delta$-derivation.}

\noindent {\bf MSC2020}: 17A30, 17B40, 17B61, 17B63. 

 \bigskip
  
\bigskip
\section*{Introduction}

	Since their origin in the 1970s in Poisson geometry, Poisson algebras have appeared in several areas of mathematics and physics, such as algebraic geometry, operads, quantization theory, quantum groups, and classical and quantum mechanics. One of the natural tasks in the theory of Poisson algebras is the description of all such algebras with fixed Lie or associative part~\cite{YYZ07,said2,jawo,kk21}.

	Recently, Bai, Bai, Guo and Wu have introduced a dual notion of the Poisson algebra~\cite{bai20}, called a \textit{transposed Poisson algebra}, by exchanging the roles of the two multiplications in the Leibniz rule defining a Poisson algebra. A transposed Poisson algebra defined this way not only shares some properties of a Poisson algebra, such as the closedness under tensor products and the Koszul self-duality as an operad but also admits a rich class of identities \cite{kms,bai20,fer23,bfk22}. It is important to note that a transposed Poisson algebra naturally arises from a Novikov-Poisson algebra by taking the commutator Lie algebra of its Novikov part \cite{bai20}.
 	Any unital transposed Poisson algebra is
	a particular case of a ``contact bracket'' algebra 
	and a quasi-Poisson algebra \cite{bfk22}.
 Each transposed Poisson algebra is a 
 commutative  Gelfand-Dorfman algebra \cite{kms}
 and it is also an algebra of Jordan brackets \cite{fer23}.
Transposed Poisson algebras are related to weak Leibniz algebras \cite{dzhuma}.
	In a recent paper by Ferreira, Kaygorodov, Lopatkin
	a relation between $\frac{1}{2}$-derivations of Lie algebras and 
	transposed Poisson algebras has been established \cite{FKL}. 	These ideas were used to describe all transposed Poisson structures 
	on  Witt and Virasoro algebras in  \cite{FKL};
	on   twisted Heisenberg-Virasoro,   Schr\"odinger-Virasoro  and  
	extended Schr\"odinger-Virasoro algebras in \cite{yh21};
	on Schr\"odinger algebra in $(n+1)$-dimensional space-time in \cite{ytk};
	on Witt type Lie algebras in \cite{kk23};
	on generalized Witt algebras in \cite{kkg23}; 
 Block Lie algebras in \cite{kk22,kkg23};
 on the Lie algebra of upper triangular matrices in \cite{KK7};
  and
on    Lie incidence algebras in \cite{kkinc}.
		Any complex finite-dimensional solvable Lie algebra was proved to admit a non-trivial transposed Poisson structure \cite{klv22}.
	The algebraic and geometric classification of $3$-dimensional transposed Poisson algebras was given in \cite{bfk23}.	
	For the list of actual open questions on transposed Poisson algebras, see \cite{bfk22}.	

 \medskip
 
The class of oscillator Lie algebras, which are the only noncommutative solvable Lie algebras that carry a bi-invariant Lorentzian metric \cite{medina85}, is a class of quadratic Lie algebras. The former algebras, which are the Lie algebras of oscillator Lie groups,
have aroused some interest \cite{biggs,said2,cal,crampe,bm11,angel}.
Ballesteros and  Herranz obtained all possible coboundary Lie bialgebras for the four-dimensional oscillator algebra \cite{angel}.
Biggs and  Remsing 
  examined the algebraic structure of the four-dimensional oscillator Lie algebra\cite{biggs}. Firstly, the adjoint orbits are determined (and graphed); these orbits turn out to be linearly isomorphic to the coadjoint orbits. This prompts the identification of a family of invariant scalar products (from which the Casimir functions can be recovered). They identified the group of automorphisms (resp. inner automorphisms); subsequently, they classified all linear subspaces. As a by-product, they arrived at classifications of the full-rank subspaces, the subalgebras, and the ideals.
 Based on the four-dimensional oscillator Lie algebra,
 Crampé, van de Vijver, and  Vinet introduced an associative algebra which they call the oscillator Racah algebra, and explain a relation between the representation theory of and multivariate Krawtchouk polynomials \cite{crampe}.
  In \cite{cal}, Calvaruso and Zaeim obtained a classification of all Ricci, curvature, Weyl, and matter collineations for the four-dimensional oscillator group equipped with a 1-parameter family of left-invariant Lorentzian metrics.
  Boucetta and Medina 
  determined the Lie bialgebra structures and the solutions of the classical Yang–Baxter equation on a generic class of oscillator Lie algebras \cite{bm11}.
   Albuquerque, Barreiro,  Benayadi,   Boucetta, and   Sánchez described all Poisson algebra structures and symmetric Leibniz bialgebra structures on generic oscillator Lie algebras \cite{said2}.
 The present paper is dedicated to studying transposed Poisson algebra structures on oscillator Lie algebras.
Namely, we have new examples of non-trivial ${\rm Hom}$-Lie algebra structures and transposed Poisson structures on oscillator Lie algebras.

\medskip
 
In Section \ref{tpaoscl}, we give the full description of all non-isomorphic transposed Poisson structures on generic oscillator Lie algebras, i.e., on   one-dimensional 
solvable extensions of the $(2n+1)$-dimensional Heisenberg algebra.
Section \ref{solv} is dedicated to a study of transposed Poisson algebra structures on some solvable algebras with a special type of nilpotent radical, obtained in some papers by Winternitz with co-authors \cite{sw05,nw94,rw93}.
Namely, we described all transposed Poisson algebra structures 
on   solvable Lie algebras with naturally graded filiform
nilpotent radical;
 on $(n+1)$-dimensional solvable   extensions of the $(2n+1)$-dimensional Heisenberg algebra; 
 and on $n$-dimensional solvable   extensions of the $n$-dimensional  algebra with the trivial multiplication.
In the last section, we also gave an answer to one question on transposed Poisson algebras early posted   in 
a paper by Beites, Ferreira, and Kaygorodov. 
Namely, we found a finite-dimensional Lie algebra with non-trivial $\frac{1}{2}$-derivations, 
but without non-trivial transposed Poisson algebra structures.

 \section{Preliminaries}\label{prem}
	All the algebras below will be over the complex field and all the linear maps will be $\mathbb C$-linear, unless otherwise stated. The notation $\gen{S}$ means the $\mathbb C$-subspace generated by $S$.

	\begin{defn}\label{tpa}
		Let ${\mathfrak L}$ be a vector space equipped with two nonzero bilinear operations $\cdot$ and $[-,-].$
		The triple $({\mathfrak L},\cdot,[-,-])$ is called a \textit{transposed Poisson algebra} if $({\mathfrak L},\cdot)$ is a commutative associative algebra and
		$({\mathfrak L},[-,-])$ is a Lie algebra that satisfies the following compatibility condition
		\begin{center}
			$2z\cdot [x,y]=[z\cdot x,y]+[x,z\cdot y].$
		\end{center}
	\end{defn}
	
	Transposed Poisson algebras were first introduced in a paper by Bai, Bai, Guo and Wu \cite{bai20}.
	
	\begin{defn}\label{tp-structures}
		Let $({\mathfrak L},[-,-])$ be a Lie algebra. A \textit{transposed Poisson algebra structure} on $({\mathfrak L},[-,-])$ is a commutative associative multiplication $\cdot$ on $\mathfrak L$ which makes $({\mathfrak L},\cdot,[-,-])$ a transposed Poisson algebra.
	\end{defn}

	\begin{defn}\label{12der}
		Let $({\mathfrak L}, [-,-])$ be an algebra with a multiplication $[-,-],$ $\varphi$ be a linear map
and $\phi$ be a bilinear map.
Then $\varphi$ is a $\frac{1}{2}$-derivation if it satisfies
\begin{center}
$\varphi[x,y]= \frac{1}{2} \big([\varphi(x),y]+ [x, \varphi(y)] \big);$
\end{center}
 $\phi$ is a $\frac{1}{2}$-biderivation if it satisfies
\begin{longtable}{crl}
$\phi([x,y],z)$&$=$&$ \frac{1}{2} \big( 
 [\phi(x,z),y] +[x,\phi(y,z)]\big),$\\ 
$\phi(x,[y,z])$&$=$&$ \frac{1}{2} \big([\phi(x,y),z]+ 
[y,\phi(x,z)] \big).$
\end{longtable}
 
	\end{defn}
	Observe that $\frac{1}{2}$-derivations are a particular case of $\delta$-derivations introduced by Filippov (see, for example,  \cite{fil1} and references therein).
 It is easy to see from Definition \ref{12der} that $[\LL,\LL]$ and $\Ann(\LL)$ are invariant under any $\frac 12$-derivation of $\LL$.

	\cref{tpa,12der} immediately implies the following key Lemma.
	\begin{lem}\label{glavlem}
		Let $({\mathfrak L},[-,-])$ be a Lie algebra and $\cdot$ a new binary (bilinear) operation on ${\mathfrak L}$. Then $({\mathfrak L},\cdot,[-,-])$ is a transposed Poisson algebra 
		if and only if $\cdot$ is commutative and associative and for every $z\in{\mathfrak L}$ the multiplication by $z$ in $({\mathfrak L},\cdot)$ is a $\frac{1}{2}$-derivation of $({\mathfrak L}, [-,-]).$
	\end{lem}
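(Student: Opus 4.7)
The plan is to observe that the compatibility condition in Definition \ref{tpa} is, up to the factor of $2$, literally the condition that each left-multiplication operator $\varphi_z\colon x\mapsto z\cdot x$ is a $\frac{1}{2}$-derivation of $(\mathfrak{L},[-,-])$. Once this is unpacked, both implications are immediate; there is no real obstacle and the proof amounts to a rewriting.

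For the forward direction, suppose $(\mathfrak{L},\cdot,[-,-])$ is a transposed Poisson algebra. By Definition \ref{tpa}, the operation $\cdot$ is commutative and associative. Fix $z\in\mathfrak{L}$ and set $\varphi_z(x):=z\cdot x$, which is a linear map on $\mathfrak{L}$. The compatibility axiom
\[
2z\cdot[x,y]=[z\cdot x,y]+[x,z\cdot y]
\]
can be rewritten, after dividing by $2$, as
\[
\varphi_z([x,y])=\tfrac{1}{2}\bigl([\varphi_z(x),y]+[x,\varphi_z(y)]\bigr),
\]
which is exactly the defining identity in Definition \ref{12der} for $\varphi_z$ to be a $\frac{1}{2}$-derivation of $(\mathfrak{L},[-,-])$.

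For the converse, assume $\cdot$ is commutative and associative, and that for every $z\in\mathfrak{L}$ the operator $\varphi_z(x)=z\cdot x$ is a $\frac{1}{2}$-derivation of $(\mathfrak{L},[-,-])$. Then for all $x,y,z\in\mathfrak{L}$ we have
\[
z\cdot[x,y]=\varphi_z([x,y])=\tfrac{1}{2}\bigl([\varphi_z(x),y]+[x,\varphi_z(y)]\bigr)=\tfrac{1}{2}\bigl([z\cdot x,y]+[x,z\cdot y]\bigr),
\]
so multiplying by $2$ recovers the compatibility condition. Together with the assumed commutativity and associativity of $\cdot$ and the Lie algebra structure already on $\mathfrak{L}$, this shows that $(\mathfrak{L},\cdot,[-,-])$ satisfies all the conditions of Definition \ref{tpa}, completing the proof.
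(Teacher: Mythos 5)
Your proof is correct and follows exactly the route the paper intends: the paper gives no proof at all, stating that the lemma follows immediately from Definitions \ref{tpa} and \ref{12der}, and your argument is precisely that unpacking — the compatibility axiom divided by $2$ is the $\frac{1}{2}$-derivation identity for each left-multiplication operator $\varphi_z$. Nothing is missing.
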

	
	The basic example of a $\frac{1}{2}$-derivation is the multiplication by a field element.
	Such $\frac{1}{2}$-derivations will be called \textit{trivial}. 
	
	\begin{thrm}\label{princth}
		Let ${\mathfrak L}$ be a Lie algebra without non-trivial $\frac{1}{2}$-derivations.
		Then all transposed Poisson algebra structures on ${\mathfrak L}$ are trivial.
	\end{thrm}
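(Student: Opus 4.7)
The plan is to invoke Lemma \ref{glavlem} to translate the hypothesis on $\frac{1}{2}$-derivations into a constraint on the commutative associative product, and then use commutativity to collapse that product to zero.

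First I would fix an arbitrary transposed Poisson algebra structure $\cdot$ on $(\mathfrak{L},[-,-])$. By \cref{glavlem}, for every $z\in\mathfrak{L}$ the left multiplication $L_z:\mathfrak{L}\to\mathfrak{L}$, $x\mapsto z\cdot x$, is a $\frac{1}{2}$-derivation of $(\mathfrak{L},[-,-])$. The assumption that $\mathfrak{L}$ has no non-trivial $\frac{1}{2}$-derivations then means $L_z$ is a scalar operator: there exists $\lambda(z)\in\mathbb{C}$ with $z\cdot x=\lambda(z)\,x$ for all $x\in\mathfrak{L}$. Linearity of $\cdot$ in the first argument makes $\lambda:\mathfrak{L}\to\mathbb{C}$ linear.

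Next I would exploit commutativity of $\cdot$. The identity $z\cdot x=x\cdot z$ rewrites as
\begin{equation*}
\lambda(z)\,x=\lambda(x)\,z\qquad\text{for all }x,z\in\mathfrak{L}.
\end{equation*}
Pick any two linearly independent vectors $x,z\in\mathfrak{L}$; the equality forces $\lambda(x)=\lambda(z)=0$. Since every element of $\mathfrak{L}$ lies in some such pair (the case $\dim\mathfrak{L}=1$ being excluded because then $[-,-]=0$, contradicting the Lie algebra supporting a genuine transposed Poisson structure in the sense of \cref{tpa}), the functional $\lambda$ vanishes identically. Hence $z\cdot x=0$ for all $x,z$, so $\cdot$ is the trivial multiplication.

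The argument has no real obstacle: the content is entirely in \cref{glavlem}, which already did the hard work of converting the associative-commutative compatibility condition into the statement that each $L_z$ is a $\frac{1}{2}$-derivation. The only thing to verify carefully is that \emph{trivial} $\frac{1}{2}$-derivations (scalar multiples of the identity) force \emph{trivial} transposed Poisson structures (the zero product), and this is immediate from commutativity as above.
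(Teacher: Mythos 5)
Your proof is correct; the paper states Theorem \ref{princth} without proof (it is quoted from \cite{FKL}), and your argument --- Lemma \ref{glavlem} turns each multiplication operator into a trivial $\frac{1}{2}$-derivation, so $z\cdot x=\lambda(z)x$ for a linear functional $\lambda$, and commutativity $\lambda(z)x=\lambda(x)z$ applied to a linearly independent pair forces $\lambda\equiv0$ --- is exactly the standard one. Your explicit handling of the one-dimensional case, where the bracket vanishes and Definition \ref{tpa} (which requires both operations to be nonzero) excludes the would-be counterexample $e\cdot e=e$, correctly closes the only genuine loophole in the statement.
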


	 Let $\cdot$ be a transposed Poisson algebra structure on a Lie algebra $({\mathfrak L}, [-,-])$. 
	 Then any automorphism $\phi$ of $({\mathfrak L}, [-,-])$ induces the transposed Poisson algebra structure $*$ on $({\mathfrak L}, [-,-])$ given by
	 \begin{align*}
		 x*y=\phi\big(\phi^{-1}(x)\cdot\phi^{-1}(y)\big),\ \ x,y\in{\mathfrak L}.
		 \end{align*}
	 Clearly, $\phi$ is an isomorphism of transposed Poisson algebras $({\mathfrak L},\cdot,[-,-])$ and $({\mathfrak L},*,[-,-])$.

\section{Transposed Poisson algebra structures on oscillator Lie algebras }\label{tpaoscl}

\begin{definition}
The oscillator Lie algebra $\mathfrak{L_{\lambda}}$ is a $(2n+2)$-dimensional Lie algebra with its canonical basis $\mathbb{B} = \{ e_{-1}, e_{0}, e_j, \check{e}_j \}_{j=1,\ldots,n}$  
and the Lie brackets are given by
 $$[e_{-1},e_j] = \lambda_j\check{e}_j, \ \ \  [e_{-1},\check{e}_j] = -\lambda_je_j, \ \ \ [e_j,\check{e}_j] = e_0,$$
for $j = 1, \ldots, n$ and $\lambda = (\lambda_1, \ldots, \lambda_n) \in \mathbb{R}^{n}$ with $0 < \lambda_1 \leq \ldots \leq \lambda_n$.
\end{definition}

\begin{proposition}\label{dmatrix} 
A linear map $\varphi : \mathfrak{L}_{\lambda} \rightarrow \mathfrak{L}_{\lambda}$ is a $\frac{1}{2}$-derivation of the algebra $(\mathfrak{L}_{\lambda}, [-,-])$ if and only if 
 
\begin{longtable}{lcl}
$\varphi(e_{-1})$ &$ =$&$ \gamma e_{-1} + \mu e_0 - \sum\limits_{j=1}^{n} 2\lambda_j \alpha_{j} e_j - \sum\limits_{j=1}^{n} 2 \lambda_j \beta_{j} \check{e}_j$,\\
$\varphi(e_0)$ &$ =$& $ \gamma e_0$,\\
$\varphi(e_j)$ & $=$ & $\alpha_{j} e_0 + \gamma e_j, \ \ \ (j=1, \ldots, n)$,\\
$\varphi(\check{e}_j)$ & $ =$ &$ \beta_{j} e_0 + \gamma \check{e}_{j}, \ \ \ (j=1, \ldots, n).$
\end{longtable}
\end{proposition}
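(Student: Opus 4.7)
The plan is to parametrize $\varphi$ on the basis $\mathbb{B}$ by undetermined scalars, apply the $\frac{1}{2}$-derivation identity to each nontrivial bracket of basis elements, and solve the resulting linear system. Two structural observations from the remark following Definition \ref{12der} cut the problem down: the annihilator $\Ann(\mathfrak{L}_{\lambda}) = \langle e_0 \rangle$ and the derived subalgebra $[\mathfrak{L}_{\lambda}, \mathfrak{L}_{\lambda}] = \langle e_0, e_k, \check{e}_k : k=1,\ldots,n\rangle$ are $\varphi$-invariant. Therefore $\varphi(e_0) = \gamma e_0$ for some scalar $\gamma$, the vectors $\varphi(e_j), \varphi(\check{e}_j)$ have no $e_{-1}$-component, and only $\varphi(e_{-1})$ may involve $e_{-1}$, with some coefficient $\eta$.

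Next I would write
$$\varphi(e_j) = a_j e_0 + \sum_k (b_{jk} e_k + c_{jk} \check{e}_k), \quad \varphi(\check{e}_j) = \bar a_j e_0 + \sum_k (\bar b_{jk} e_k + \bar c_{jk} \check{e}_k),$$
$$\varphi(e_{-1}) = \eta e_{-1} + \mu e_0 + \sum_k (p_k e_k + q_k \check{e}_k),$$
and impose the $\frac{1}{2}$-derivation identity bracket by bracket. First, the brackets landing in $\langle e_0 \rangle$ or in $0$, i.e.\ $[e_j,\check{e}_j]=e_0$ together with $[e_i,e_j]=[\check{e}_i,\check{e}_j]=[e_i,\check{e}_j]=0$ for $i\neq j$, will yield the trace identity $b_{jj}+\bar c_{jj}=2\gamma$, the skew relation $b_{jk}+\bar c_{kj}=0$ for $j\neq k$, and the symmetries $c_{jk}=c_{kj}$, $\bar b_{jk}=\bar b_{kj}$. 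Second, the brackets $[e_{-1},e_j]=\lambda_j\check{e}_j$ and $[e_{-1},\check{e}_j]=-\lambda_j e_j$ will fix $p_j=-2\lambda_j a_j$ and $q_j=-2\lambda_j \bar a_j$, force $b_{jj}=\bar c_{jj}=\eta=\gamma$, and produce the off-diagonal proportionalities $b_{jk}=\tfrac{\lambda_k}{2\lambda_j}\bar c_{jk}$ and $c_{jk}=-\tfrac{\lambda_k}{2\lambda_j}\bar b_{jk}$ (and their swaps in $j,k$). Iterating each proportionality with its swap gives $\bar c_{jk}\bigl(1-\tfrac{\lambda_k^2}{4\lambda_j^2}\bigr)=0$ and similarly for $\bar b_{jk}$, which kills every off-diagonal coefficient outside the resonance set $\{(j,k):\lambda_k=2\lambda_j\}$. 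Setting $\alpha_j:=a_j$, $\beta_j:=\bar a_j$ will then yield the stated formulas, and the converse is a routine bracket-by-bracket verification.

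The main obstacle is precisely the resonance case $\lambda_k=2\lambda_j$ (possible since only $\lambda_j>0$ is assumed, not linear independence): there the single pair of brackets $[e_{-1},e_j],[e_{-1},\check{e}_j]$ cannot separate $\bar c_{jk}$ from $\bar c_{kj}$, and one must instead use the same proportionality with indices reversed, which now contributes a factor $1-\tfrac{1}{16}\neq 0$, forcing $\bar c_{kj}=0$; the skew relation $b_{jk}+\bar c_{kj}=0$ from step one then propagates the vanishing back to $\bar c_{jk}$, and analogously for $\bar b_{jk}$. Once this resonance bookkeeping is untangled, no exceptional families of $\frac{1}{2}$-derivations survive and the $O(n^2)$-variable linear system collapses to the four-parameter families $\gamma,\mu,\alpha_j,\beta_j$ displayed in the statement.
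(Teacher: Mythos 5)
Your proposal is correct and follows essentially the same route as the paper: parametrize $\varphi$ in coordinates, use invariance of $\Ann(\mathfrak{L}_\lambda)=\langle e_0\rangle$ and $[\mathfrak{L}_\lambda,\mathfrak{L}_\lambda]$, and extract exactly the paper's relations \eqref{eq1}--\eqref{eq3} from the brackets $[e_j,\check e_j]=e_0$, the vanishing brackets, and $[e_{-1},e_j]$, $[e_{-1},\check e_j]$. Your explicit treatment of the resonance case $\lambda_k=2\lambda_j$ (reversing indices to get the factor $4\lambda_k^2-\lambda_j^2\neq0$ and then propagating via the symmetry relations) is precisely the bookkeeping hidden behind the paper's ``it is not difficult to deduce from \eqref{eq1}, \eqref{eq2} and \eqref{eq3}.''
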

\begin{proof}
For each $\ell \in \mathbb{B},$  we consider 
\begin{center}
$\varphi(\ell) = \varphi(\ell)_{-1}e_{-1} + \varphi(\ell)_{0}e_0 + \sum\limits_{k=1}^{n} (\varphi(\ell)_{k}e_k +  \varphi(\ell)_{\check{k}}\check{e}_k)$.
\end{center}

Since $[\LL,\LL]$ and $\Ann(\LL)$ are invariant under any $\frac 12$-derivation, we obtain that $\varphi(e_0) = \gamma e_0$ and $\varphi(e_j)_{-1} = \varphi(\check{e}_j)_{-1} =0.$

Now from $e_0 = [e_j,\check{e}_j]$ for each $j \in \{1, \ldots, n\},$ we get
$$\varphi(e_0) = \varphi([e_j,\check{e}_j]) = \frac{1}{2}([\varphi(e_j),\check{e}_j] + [e_j,\varphi(\check{e}_j)]) =
\frac{1}{2}(\varphi(e_j)_{j} + \varphi(\check{e}_j)_{\check{j}}e_0),$$
which implies  $\varphi(e_j)_j+\varphi(\check{e}_j)_{\check{j}} = 2\gamma.$

Since $[e_{-1},e_j] = \lambda_j \check{e}_j$ and $[e_{-1},\check{e}_j] = -\lambda_je_j$ then
\begin{longtable}{rcl}
$\varphi(\lambda_j\check{e}_j)$ &$ =$&$ \varphi([e_{-1},e_j]) = \frac{1}{2}([\varphi(e_{-1}),e_j] + [e_{-1},\varphi(e_j)])$ \\
&$=$& $\frac{1}{2} \lambda_j \varphi(e_{-1})_{-1} \check{e}_j - \frac{1}{2} \varphi(e_{-1})_{\check{j}} e_0   +\frac{1}{2} \sum\limits_{k=1}^n\Big(\lambda_k \varphi(e_j)_{k} \check{e}_k -\lambda_k \varphi(e_j)_{\check{k}} e_k\Big),$
\end{longtable}
and
\begin{longtable}{rcl}
$-\varphi(\lambda_je_j)$&$ =$&$ \varphi([e_{-1},\check{e}_j]) = \frac{1}{2}([\varphi(e_{-1}),\check{e}_j] + [e_{-1},\varphi(\check{e}_j)])$ \\
&$=$&$ -\frac{1}{2} \lambda_j \varphi(e_{-1})_{-1} e_j + \frac{1}{2} \varphi(e_{-1})_{j} e_0 +\frac{1}{2} \Big(\sum\limits_{k=1}^n \lambda_k \varphi(\check{e}_j)_{k} \check{e}_k -\lambda_k \varphi(\check{e}_j)_{\check{k}} e_k\Big).$
\end{longtable}

Hence,  for any $j \in \{1, \ldots, n\},$ we have 
$$\varphi(e_{-1})_{-1} = \varphi(e_j)_j= \varphi(\check{e}_j)_{\check{j}} =\gamma, \quad \varphi(e_{-1})_j=-2\lambda_j \varphi(e_j)_0, \quad  \varphi(e_{-1})_{\check{j}}=-2\lambda_j \varphi(e_{\check{j}})_0,$$ 
and
\begin{equation}\label{eq1}
 2\lambda_j \varphi(\check{e}_j)_k   = - \lambda_k \varphi(e_j)_{\check{k}}, \quad 2\lambda_j  \varphi(e_j)_{\check{k}}  = - \lambda_k \varphi(\check{e}_j)_k, \quad 1 \leq j, k \leq n,
\end{equation}

\begin{equation}\label{eq2}
 2\lambda_j \varphi(\check{e}_j)_{\check{k}}   =  \lambda_k \varphi(e_j)_k, \quad 2\lambda_j  \varphi(e_j)_k  =  \lambda_k \varphi(\check{e}_j)_{\check{k}}, \quad 1 \leq j, k (j \neq k) \leq n.
\end{equation}

Finally, from
\begin{longtable}{ccccc}
$0$& $=$ & $\varphi([e_j, e_k])$ &$=$ & $\frac{1}{2}([\varphi(e_j),e_k] + [e_j,\varphi(e_k)]),$\\
$0$ &$=$ & $\varphi([e_j, \check{e}_k])$ &$=$ & $\frac{1}{2}([\varphi(e_j),\check{e}_k] + [e_j,\varphi(\check{e}_k)]),$\\
$0$ &$=$ & $\varphi([\check{e}_j, \check{e}_k])$ &$=$ & $\frac{1}{2}([\varphi(\check{e}_j),\check{e}_k] + [\check{e}_j,\varphi(\check{e}_k)]),$
\end{longtable}
for $1 \leq j, k (j \neq k) \leq n,$ we derive that 
\begin{equation}\label{eq3}
 \varphi(e_j)_{\check{k}}   = \varphi(e_k)_{\check{j}}, \quad 
 \varphi(e_j)_k   = - \varphi(\check{e}_k)_{\check{j}}, \quad 
 \varphi(\check{e}_j)_k   = \varphi(\check{e}_k)_j, 
\end{equation}

It is not difficult to deduce from \eqref{eq1}, \eqref{eq2} and \eqref{eq3} that 
 \begin{center} $ \varphi(e_j)_t=0$ for $t\in \{1,  \ldots, n, \check{1}, \ldots, \check{n}\} \setminus \{ j\},$\end{center}
\begin{center} $ \varphi(e_{\check{j}})_t=0$ for $t \in \{1,  \ldots, n, \check{1}, \ldots, \check{n}\} \setminus \{ \check{j}\}.$ \end{center}


Hence, our map $\varphi$ has the following type:
\begin{longtable}{rcl}
$\varphi(e_{-1})$ &$ =$&$ \gamma e_{-1} + \mu e_0 - \sum\limits_{j=1}^{n} 2\lambda_j \alpha_{j} e_j - \sum\limits_{j=1}^{n} 2 \lambda_j \beta_{j} \check{e}_j$,\\
$\varphi(e_0)$ &$ =$& $ \gamma e_0$,\\
$\varphi(e_j)$ & $=$ & $\alpha_{j} e_0 + \gamma e_j, \ \ \ (j=1, \ldots, n)$,\\
$\varphi(\check{e}_j)$ & $ =$ &$ \beta_{j} e_0 + \gamma \check{e}_{j}, \ \ \ (j=1, \ldots, n).$
\end{longtable}
It is easy to see that all linear maps given by the present way are $\frac{1}{2}$-derivations.
It finishes the proof.

\end{proof}

 Filippov proved that each $\delta$-derivation ($\delta\neq0,1$) gives a non-trivial ${\rm Hom}$-Lie algebra structure \cite[Theorem 1]{fil1}.
 Hence, by Proposition \ref{dmatrix}, we have the following corollary.

  \begin{corollary}
  Each oscillator Lie algebra admits a non-trivial ${\rm Hom}$-Lie algebra structure.
 \end{corollary}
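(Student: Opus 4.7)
The plan is to chain Proposition \ref{dmatrix} with the theorem of Filippov cited in the paragraph just before the corollary: any non-trivial $\delta$-derivation of a Lie algebra with $\delta \notin \{0,1\}$ gives rise to a non-trivial ${\rm Hom}$-Lie algebra structure on that algebra \cite[Theorem 1]{fil1}. Since $\delta = \tfrac{1}{2}$ lies in the admissible range, all I have to do is exhibit a single non-trivial $\tfrac{1}{2}$-derivation on each oscillator Lie algebra $\mathfrak{L}_{\lambda}$.

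To find one, I would read off the general form provided by Proposition \ref{dmatrix}, which parametrizes every $\tfrac{1}{2}$-derivation by the scalars $\gamma, \mu, \alpha_1, \ldots, \alpha_n, \beta_1, \ldots, \beta_n$. A $\tfrac{1}{2}$-derivation is trivial (a scalar multiple of the identity) exactly in the one-parameter family $\mu = 0,\ \alpha_j = \beta_j = 0$. Hence, picking for instance $\gamma = 0$, $\mu = 1$, $\alpha_j = \beta_j = 0$ for all $j$ yields the linear map $\varphi$ defined by $\varphi(e_{-1}) = e_0$ and $\varphi = 0$ on all remaining basis vectors; Proposition \ref{dmatrix} guarantees that this is a $\tfrac{1}{2}$-derivation, and it is non-trivial because $\varphi(e_0) = 0$ while $\varphi(e_{-1}) \neq 0$, ruling out proportionality to $\id$.

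The proof is essentially a one-liner once Proposition \ref{dmatrix} is in hand, and there is no real obstacle. The only point worth noting is that the parameter space of $\tfrac{1}{2}$-derivations strictly exceeds the line of scalar maps, which is transparent from the formula since $\mu$ (and each $\alpha_j$ and $\beta_j$) appears as an independent free parameter. Filippov's construction then upgrades this $\tfrac{1}{2}$-derivation to the desired non-trivial ${\rm Hom}$-Lie algebra structure on $\mathfrak{L}_{\lambda}$.
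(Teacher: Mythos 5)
Your proposal is correct and follows exactly the paper's route: Proposition \ref{dmatrix} supplies non-trivial $\tfrac{1}{2}$-derivations (the parameters $\mu,\alpha_j,\beta_j$ being free beyond the scalar line), and Filippov's theorem converts any such non-trivial $\delta$-derivation with $\delta\notin\{0,1\}$ into a non-trivial ${\rm Hom}$-Lie structure. The only difference is that you make the witness explicit (e.g.\ $\gamma=0$, $\mu=1$, $\varphi(e_{-1})=e_0$, zero elsewhere), which the paper leaves implicit.
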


\begin{thrm}\label{theodot}
Let $(\mathfrak{L}_{\lambda},  [-,-])$ be the oscillator Lie algebra. Then every symmetric $\frac{1}{2}$-biderivation gives a transposed Poisson structure.
If $(\mathfrak{L}_{\lambda}, \cdot, [-,-])$ is a transposed  Poisson algebra, then  $(\mathfrak{L}_{\lambda}, \cdot)$ has  the following  multiplication:
\begin{longtable}{lcl}
$e_{-1} \cdot e_{-1}$ & $ =$ &$ \gamma  e_{-1}+ \mu e_0 - 2\sum\limits_{k=1}^{n} \lambda_k (\alpha_{k} e_k +\beta_{k} \check{e}_k),$\\
$ e_{-1} \cdot e_{0}$&$ =$&$ \gamma e_0,$\\
$e_{-1} \cdot e_{j}$ & $ =$ & $ \alpha_{j} e_0 + \gamma e_{j}, $\\
$e_{-1} \cdot \check{e}_{j}$ & $ =$ & $ \beta_{j} e_0 + \gamma \check{e}_{j},$\\
$e_j \cdot e_j$ & $ =$ & $ \check{e}_j \cdot \check{e}_j = -\frac{\gamma}{2\lambda_j} e_0  .$
\end{longtable}
$(\mathfrak{L}_{\lambda}, \cdot, [-,-])$ is a Poisson algebra if and only if
$(\gamma, \alpha_1, \ldots \alpha_n, \beta_1, \ldots, \beta_n)=0.$
\end{thrm}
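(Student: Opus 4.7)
The plan is to reduce the problem via \cref{glavlem} and then apply \cref{dmatrix} twice: once to get the form of $L_{e_{-1}}(x) := e_{-1}\cdot x$, and once to propagate this through commutativity to determine all other left-multiplications $L_v$. By \cref{glavlem}, $(\mathfrak{L}_\lambda,\cdot,[-,-])$ is a transposed Poisson algebra iff $\cdot$ is commutative, associative, and each $L_v$ is a $\tfrac12$-derivation. Taking $v = e_{-1}$ and invoking \cref{dmatrix}, we may write $L_{e_{-1}}$ with parameters $\gamma,\mu,\alpha_j,\beta_j$, which directly yields the first four lines of the theorem's multiplication table.

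Next, I would use commutativity $L_{e_{-1}}(w) = L_w(e_{-1})$ for $w\in\{e_0,e_j,\check e_j\}$ to pin down the parameters of $L_{e_0}$, $L_{e_j}$, $L_{\check e_j}$. For instance, since \cref{dmatrix} forces $L_{e_j}(e_{-1})$ to be a specific linear combination of $e_{-1},e_0,e_k,\check e_k$, while commutativity demands $L_{e_j}(e_{-1}) = \alpha_j e_0 + \gamma e_j$, matching coefficients yields the value of $L_{e_j}$'s $\gamma$-parameter as $0$, its $\mu$-parameter as $\alpha_j$, its $\alpha_k$-parameter as $-\frac{\gamma}{2\lambda_j}\delta_{jk}$, and its $\beta_k$-parameter as $0$. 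This immediately produces $e_j\cdot e_j = -\frac{\gamma}{2\lambda_j}e_0$ and forces $e_j\cdot e_0,\, e_j\cdot\check e_k,\,e_j\cdot e_k$ (for $k\neq j$) to vanish. The argument for $L_{\check e_j}$ is symmetric, and for $L_{e_0}$ one finds $L_{e_0}\equiv 0$ on everything except $e_{-1}$. At this stage the multiplication is fully determined, matching the statement.

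The remaining verification is associativity: one must check $(u\cdot v)\cdot w = u\cdot(v\cdot w)$ for basis triples. This should be the main bookkeeping step, but the structure is favourable because all non-$e_{-1}$ basis elements multiply to scalar multiples of $e_0$ (or to zero), and $e_0$ is annihilated by everything except $e_{-1}$; so the only potentially nontrivial case is when two or more of the factors are $e_{-1}$, and the identity $L_{e_{-1}}(L_{e_{-1}}(x)) = L_{e_{-1}\cdot e_{-1}}(x)$ reduces to comparing scalars on $e_0$ using the explicit formulas. This simultaneously proves the first assertion: a symmetric $\tfrac12$-biderivation (which by \cref{dmatrix} again has exactly the above form with the parameters constrained by symmetry) automatically satisfies associativity, hence defines a transposed Poisson structure.

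For the final Poisson claim, we must additionally impose the Leibniz identity $[x,y\cdot z]=[x,y]\cdot z+y\cdot[x,z]$. Testing it on $(x,y,z)=(e_j,e_{-1},e_{-1})$ yields the equation $-\gamma\lambda_j\check e_j - 2\lambda_j\beta_j e_0 = -2\lambda_j\gamma\check e_j - 2\lambda_j\beta_j e_0$, forcing $\gamma=0$; then testing on $(e_{-1},e_j,e_{-1})$ gives $\lambda_j\beta_j = 0$, so $\beta_j=0$, and symmetrically $\alpha_j=0$. Conversely, when $\gamma=\alpha_j=\beta_j=0$ the only nontrivial product is $e_{-1}\cdot e_{-1}=\mu e_0\in\Ann(\mathfrak L_\lambda)$, which makes the Leibniz identity trivially satisfied. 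This finishes the equivalence.
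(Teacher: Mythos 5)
Your proposal is correct and follows essentially the same route as the paper: reduce via Lemma \ref{glavlem} to the family of $\tfrac12$-derivations $\varphi_\ell$ described in Proposition \ref{dmatrix}, then match parameters using commutativity $\varphi_{\ell_1}(\ell_2)=\varphi_{\ell_2}(\ell_1)$ (you use only the relations against $e_{-1}$, which indeed suffice, whereas the paper also writes out the $e_i$--$e_j$ relations). Your explicit verifications of associativity via $L_{e_{-1}\cdot e_{-1}}=L_{e_{-1}}^2$ and of the Poisson criterion by testing the Leibniz rule on $(e_j,e_{-1},e_{-1})$ and $(e_{-1},e_j,e_{-1})$ are sound and actually supply details the paper leaves implicit.
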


\begin{proof}
We aim to describe the multiplication $\cdot.$
By Lemma \ref{glavlem}, 
for each  $\ell \in \mathbb{B}$, there is a related 
$\frac{1}{2}$-derivation $\varphi_{\ell}$ of $(\mathfrak{L}_{\lambda}, [-,-])$
such that $\varphi_{\ell_1}(\ell_2 )=\ell_1 \cdot \ell_2 = \varphi_{\ell_2}(\ell_1),$ 
with $\ell_1, \ell_2 \in \mathbb{B}.$
By Proposition \ref{dmatrix}, for each  $\ell \in \mathbb{B}$, we have

\begin{longtable}{rcl}
$\varphi_{\ell}(e_{-1})$ & $ =$ &$\gamma^{\ell} e_{-1} + \mu^{\ell} e_0 - \sum\limits_{k=1}^{n} 2\lambda_k \alpha^{\ell}_{k} e_k - \sum\limits_{k=1}^{n} 2 \lambda_k \beta^{\ell}_{k} \check{e}_k$,\\
$\varphi_{\ell}(e_0)$ & $ =$& $  \gamma^{\ell} e_0$,\\
$\varphi_{\ell}(e_j) $&$=$&$ \alpha^{\ell}_{j} e_0 + \gamma^{\ell} e_j, \ \ \ (j=1, \ldots, n)$,\\
$\varphi_{\ell}(\check{e}_j)$ & $ =$&$ \beta^{\ell}_{j} e_0 + \gamma^{\ell} \check{e}_{j}, \ \ \ (j=1, \ldots, n)$.
\end{longtable}
In the first, for all $i,j \in \{1, \ldots, n\},$ we have 
\begin{longtable}{rcccccl}

$\alpha^{e_{i}}_{j} e_0 + \gamma^{e_{i}} e_j $&$=$&$ \varphi_{e_{i}}(e_j) $&$=$&$ \varphi_{e_{j}}(e_i) $&$=$&$ \alpha^{e_{j}}_{i} e_0 + \gamma^{e_{j}} e_i$,\\
$\alpha^{\check{e}_i}_{j} e_0 + \gamma^{\check{e}_i} e_j$&$ =$&$ \varphi_{\check{e}_i}(e_j) $&$=$&$  \varphi_{e_{j}}(\check{e}_i)$&$ =$&$ \beta^{e_{j}}_{i} e_0 + \gamma^{e_{j}} \check{e}_{i}$,\\
$\beta^{\check{e}_i}_{j} e_0 + \gamma^{\check{e}_i} \check{e}_{j}$&$ =$&$ \varphi_{\check{e}_i}(\check{e}_j)$&$ = $&$\varphi_{\check{e}_j}(\check{e}_i) $&$ =$&$\beta^{\check{e}_j}_{i} e_0 + \gamma^{\check{e}_j} \check{e}_{i}.$
\end{longtable}

Hence 
$\beta^{\check{e}_j}_{i} = \beta^{\check{e}_i}_{j}$, 
$\alpha^{e_{i}}_{j}=\alpha^{e_{j}}_{i} $, 
$\alpha^{\check{e}_i}_{j} = \beta^{e_{j}}_{i}$ and 
$\gamma^{e_{j}} = \gamma^{\check{e}_j} =  0$.

Second, we have
$$
\gamma^{e_{-1}}  e_0 = \varphi_{e_{-1}}(e_0) =\varphi_{e_{0}}(e_{-1}) =  \gamma^{e_{0}} e_{-1}+\mu^{e_0}e_0
-2 \sum\limits_{k=1}^n \lambda_k (\alpha_k^{e_0} e_k+\beta_k^{e_0} \check{e}_k),$$
which implies $ \alpha^{e_0}_j=\beta^{e_0}_j= \gamma^{e_{0}} = 0$ and $\gamma^{e_{-1}}=\mu^{e_0}$.

Also,  for all $ j \in \{1, \ldots, n\},$ we have 
\begin{longtable}{rcccccl}
$\alpha^{e_{-1}}_{j} e_0 + \gamma^{e_{-1}} e_j$&$ = $&$ \varphi_{e_{-1}}(e_j) $&$ = $&$ \varphi_{e_{j}}(e_{-1}) $&$ = $&$    
\mu^{e_{j}} e_0 - 2\sum\limits_{k=1}^{n} \lambda_k (\alpha^{e_{j}}_{k} e_k+\beta^{e_{j}}_{k} \check{e}_k),$\\
$\beta^{e_{-1}}_{j} e_0 + \gamma^{e_{-1}} \check{e}_{j} $&$ = $&$ \varphi_{e_{-1}}(\check{e}_j)$&$ = $&$ \varphi_{\check{e}_j}(e_{-1}) $&$ = $&$  
 \mu^{\check{e}_j} e_0 - 2\sum\limits_{k=1}^{n}  \lambda_k( \alpha^{\check{e}_j}_{k} e_k +  \beta^{\check{e}_j}_{k} \check{e}_k),$
\end{longtable}
which implies
\begin{longtable}{llll}
$\gamma^{e_{-1}} =  -2\lambda_j \alpha^{e_j}_j,$ & 
$ \alpha^{e_j}_j=\beta^{\check{e}_j}_j ,$ &  
$\alpha^{\check{e}_j}_{k} = 0,$ & 
$\beta^{e_j}_k = 0,$ \\ 

$\alpha^{e_j}_k = 0 \ (j\neq k),$ & 
$\beta^{\check{e}_j}_{k} = 0  \ (j\neq k),$ & 
$\alpha^{e_{-1}}_j = \mu^{e_j},$ & 
$\beta^{e_{-1}}_j = \mu^{\check{e}_j}$.
\end{longtable}

It follows that, for all $j \in \{1, \ldots, n\}$,   the multiplication $\cdot$ can be written as
\begin{longtable}{lcl}
$e_{-1} \cdot e_{-1}$ & $ =$ &$ \gamma  e_{-1}+ \mu e_0 - 2\sum\limits_{k=1}^{n} \lambda_k (\alpha_{k} e_k +\beta_{k} \check{e}_k),$\\
$ e_{-1} \cdot e_{0}$&$ =$&$ \gamma e_0,$\\
$e_{-1} \cdot e_{j}$ & $ =$ & $ \alpha_{j} e_0 + \gamma e_{j}, $\\
$e_{-1} \cdot \check{e}_{j}$ & $ =$ & $ \beta_{j} e_0 + \gamma \check{e}_{j},$\\
$e_j \cdot e_j$ & $ =$ & $ \check{e}_j \cdot \check{e}_j = -\frac{\gamma}{2\lambda_j} e_0  .$
\end{longtable}


It is easy to see that $\cdot$ is a commutative associative multiplication.

\end{proof}

\subsection*{Isomorphism problem for transposed Poisson algebras on generic oscillator Lie algebras}
An oscillator Lie algebra is called generic if 
$0< \lambda_1 < \ldots  < \lambda_n$ and $\lambda_i+\lambda_j \neq \lambda_k$ for all $1\leq i < j<k \leq n.$ 
It is easy to see that
$(\mathfrak{L}_{ \{ \lambda_1, \lambda_2, \ldots, \lambda_n\}}, [-,-])$ is isomorphic to
$(\mathfrak{L}_{ \left\{ 1, \frac{\lambda_2}{\lambda_1}, \ldots, \frac{\lambda_n}{\lambda_1}\right\}}, [-,-]).$ 
Hence, we will consider only the case $\lambda_1=1.$

 To prove the main result of this part of the paper, we should give the following useful Lemma.
 
 \begin{lemma}\label{autgen}
 Let $\phi$ be an   automorphism of $(\mathfrak{L}_{\lambda}, [-,-])$.
 Then $\phi$ has the following types:
 \begin{longtable}{rcl}
$ \phi(e_{-1})$&$=$&$  \pm e_{-1} + \nu e_0+\sum\limits_{i=1}^n \nu_i e_i+\sum\limits_{i=1}^n \check{\nu}_i \check{e}_i,$\\ 
$ \phi(e_0)$&$=$&$\pm \xi e_0, $\\ 
$ \phi(e_i)$&$=$&$  \frac{ \check{\nu}_i\check{\mu}_i \mp \nu_i \mu_i}{\lambda_i}e_0+  \mu_i e_i \mp \check{\mu}_i \check{e}_i,$\\ 
$ \phi(\check{e}_i)$&$=$&$   \frac{ -\check{\nu}_i\check{\mu}_i \mp \nu_i \mu_i}{\lambda_i}e_0+  \check{\mu}_i e_i \pm \mu_i \check{e}_i,$\\ 
      \end{longtable}
  where $\xi= \mu_i^2 +\check{\mu}_i^2.$  
 \end{lemma}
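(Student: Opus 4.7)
The plan is to peel off the coefficients of $\phi$ stage by stage using only the Lie structure of $\mathfrak{L}_\lambda$. First, I would record that the annihilator $\Ann(\mathfrak{L}_\lambda)=\langle e_0\rangle$ and the derived subalgebra $[\mathfrak{L}_\lambda,\mathfrak{L}_\lambda]=\langle e_0,e_1,\dots,e_n,\check e_1,\dots,\check e_n\rangle$ are both invariant under $\phi$. This forces $\phi(e_0)=\xi e_0$ for some $\xi\neq 0$ and places $\phi(e_j),\phi(\check e_j)$ inside $[\mathfrak{L}_\lambda,\mathfrak{L}_\lambda]$. Writing $\phi(e_{-1})=a e_{-1}+\nu e_0+\sum_i(\nu_i e_i+\check\nu_i\check e_i)$, invertibility on the one-dimensional quotient $\mathfrak{L}_\lambda/[\mathfrak{L}_\lambda,\mathfrak{L}_\lambda]$ yields $a\neq 0$. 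I would then set up the primary unknowns $\phi(e_j)=\alpha_j e_0+\sum_i(\mu_{ij}e_i+\check\mu_{ij}\check e_i)$ and $\phi(\check e_j)=\check\alpha_j e_0+\sum_i(\mu'_{ij}e_i+\check\mu'_{ij}\check e_i)$.

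Second, I would apply $\phi$ to the brackets $[e_{-1},e_j]=\lambda_j\check e_j$ and $[e_{-1},\check e_j]=-\lambda_j e_j$. Comparing the $e_i$- and $\check e_i$-coefficients on both sides yields, for every $(i,j)$, the four relations $\lambda_j\mu'_{ij}=-a\lambda_i\check\mu_{ij}$, $\lambda_j\check\mu'_{ij}=a\lambda_i\mu_{ij}$, and their mirror pair obtained by exchanging the roles of $\phi(e_j)$ and $\phi(\check e_j)$. Substituting one into the other forces the factor $a^2\lambda_i^2-\lambda_j^2$ to annihilate each $(i,j)$-block unknown. To fix $a$ I would turn to a conjugation argument: $\ad(\phi(e_{-1}))=\phi\,\ad(e_{-1})\phi^{-1}$ is conjugate to $\ad(e_{-1})$, and since every element of $[\mathfrak{L}_\lambda,\mathfrak{L}_\lambda]$ has trivial $\ad$-action on the quotient $[\mathfrak{L}_\lambda,\mathfrak{L}_\lambda]/\Ann(\mathfrak{L}_\lambda)$, the induced operator there equals $a\,\ad(e_{-1})$, whose spectrum is $\{\pm i a\lambda_j\}_{j=1}^n$. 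Equating this multiset with $\{\pm i\lambda_j\}_{j=1}^n$ and invoking the strict ordering $0<\lambda_1<\cdots<\lambda_n$ of the generic case forces $a=\pm 1$. Combined with the compatibility condition, this annihilates all off-diagonal blocks $(i\neq j)$ and collapses the diagonal relations to $\mu'_{jj}=-a\check\mu_{jj}$ and $\check\mu'_{jj}=a\mu_{jj}$.

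Third, I would extract the remaining data. Relabelling $\mu_j:=\mu_{jj}$ and $\check\mu_j:=\mp\check\mu_{jj}$ (the sign tied to $a=\pm 1$) produces exactly the displayed $e_i$- and $\check e_i$-components of $\phi(e_j)$ and $\phi(\check e_j)$. The coefficients $\alpha_j$ and $\check\alpha_j$ are then read off from the $e_0$-components of the same two bracket identities, producing the claimed rational expressions with denominator $\lambda_j$. Finally, applying $\phi$ to $[e_j,\check e_j]=e_0$ gives $\xi=\pm(\mu_j^2+\check\mu_j^2)$, which simultaneously determines $\xi$ and shows that $\mu_j^2+\check\mu_j^2$ is independent of $j$. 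The remaining identities $[e_i,e_j]=[\check e_i,\check e_j]=[e_i,\check e_j]=0$ for $i\neq j$ hold automatically once the off-diagonal blocks vanish, and a direct check confirms that every map of the displayed form is indeed an automorphism.

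The main obstacle is the joint elimination of the scalar $a$ and of all off-diagonal block coefficients: the bracket relations by themselves only yield the implication $\check\mu_{ij}\neq 0\implies a^2\lambda_i^2=\lambda_j^2$, which still leaves too much freedom. It is precisely the spectral/conjugation argument on the quotient $[\mathfrak{L}_\lambda,\mathfrak{L}_\lambda]/\Ann(\mathfrak{L}_\lambda)$, together with the strict inequalities $0<\lambda_1<\cdots<\lambda_n$ of the generic case, that breaks the symmetry, pins down $a=\pm 1$, and thereby reduces the entire computation to the rank-one $(e_j,\check e_j)$-blocks.
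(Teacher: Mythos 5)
Your proposal is correct and follows essentially the same route as the paper: invariance of $\Ann(\mathfrak{L}_{\lambda})$ and $[\mathfrak{L}_{\lambda},\mathfrak{L}_{\lambda}]$, then the brackets with $e_{-1}$ to make the factor $a^2\lambda_i^2-\lambda_j^2$ annihilate each off-diagonal block, then the remaining identities $[e_{-1},e_j]=\lambda_j\check{e}_j$, $[e_{-1},\check{e}_j]=-\lambda_j e_j$, $[e_j,\check{e}_j]=e_0$ to read off the $e_0$-components and $\xi=\mu_i^2+\check{\mu}_i^2$. The only divergence is at the step $a=\pm1$: the paper extracts it from the diagonal of the double-bracket identity $\lambda_j^2\phi(e_j)=-[\phi(e_{-1}),[\phi(e_{-1}),\phi(e_j)]]$, whereas your conjugation/spectral argument on $[\mathfrak{L}_{\lambda},\mathfrak{L}_{\lambda}]/\Ann(\mathfrak{L}_{\lambda})$ reaches the same conclusion and in fact treats more carefully the a priori possibility that $\phi$ permutes the $(e_j,\check{e}_j)$-blocks, which the paper's one-line assertion glosses over.
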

 \begin{proof}
 Let us say that 
 \begin{center}
     $\phi(e_m)= \alpha^{m}_{-1} e_{-1}+\alpha^{m}_{0} e_{0}+\sum\limits_{i=1}^{n}(\alpha_i^m e_i+\beta_i^m \check{e}_i),$ where $m\in \{-1,0,1, \ldots, n\}.$
      \end{center}

Since $\Ann(\LL)$ and $[\LL,\LL]$ are invariant under any automorphism, we have that 
$$\alpha_{-1}^{0} =\alpha_{i}^{0}=\beta_{i}^{0}=0, \quad \alpha_{i}^{-1}=0, \quad 1 \leq i \leq n.$$

Then, for each $j \in \{1,\ldots, n\},$ we have
\[\lambda_j^2 \phi(e_j)=-[\phi(e_{-1}),[\phi(e_{-1}),\phi(e_{j})]]=
(\alpha^{-1}_{-1})^2\sum\limits_{i=1}^n\lambda_i^2(\alpha_i^j e_i+\beta_i^j \check{e}_j)
-\alpha^{-1}_{-1}\sum\limits_{i=1}^n(\alpha_{i}^{-1}\alpha_i^j+\beta_i^{-1}\beta_i^j)e_0.
\]

Since $\lambda_i\neq \lambda_j,$ we have that 
$$\alpha_{-1}^{-1}=\pm1,  \quad \alpha^j_0 = -\frac{\alpha_{-1}^{-1}}{\lambda_j}(\alpha_{j}^{-1}\alpha_j^j+\beta_j^{-1}\beta_j^j), \quad \alpha_i^j = \beta^j_i =0, \quad 1 \leq i (i\neq j) \leq n.$$

Thus, we obtain $$\phi(e_j)=-\frac{\alpha_{-1}^{-1}}{\lambda_j}(\alpha_{j}^{-1}\alpha_j^j+\beta_j^{-1}\beta_j^j)e_0+\alpha_j^je_j+\beta^j_j\check{e}_j$$

In a similar way, considering 
$\lambda_j^2 \phi(\check{e}_j)=-[\phi(e_{-1}),[\phi(e_{-1}),\phi(\check{e}_{j})]],$ we have 
$$\phi(\check{e}_j)=-\frac{\alpha_{-1}^{-1}}{\lambda_j}(\alpha_{j}^{-1}\mu_j^j+\beta_j^{-1}\nu_j^j)e_0+\mu_j^je_j+\nu^j_j\check{e}_j.$$

Finally, using the equalities $$\phi[e_{i},\check{e}_{i}] = [\phi(e_{i}),\phi(\check{e}_{i})],\quad \phi[e_{-1},{e}_{i}] = [\phi(e_{-1}),\phi({e}_{i})], \quad \phi[e_{i},\check{e}_{i}] = [\phi(e_{i}),\phi(\check{e}_{i})],$$
we derive that $\phi$ has the form from our statement.
\end{proof}

For our main theorem from this part, we need to define the following set
\begin{center}
    $\mathbb C_{>0}= \{ a+ b {\rm  i}  \}_{a>0} \cup \{  b{\rm  i} \}_{b\geq0}.$ 
\end{center}

\begin{thrm}
Let $(\mathfrak{L}_{\lambda},  [-,-])$ be the generic oscillator Lie algebra. If $(\mathfrak{L}_{\lambda}, \cdot, [-,-])$ is a transposed  Poisson algebra structure on $(\mathfrak{L}_{\lambda}, [-,-])$, 
then  $(\mathfrak{L}_{\lambda}, \cdot)$ 
has the following multiplication.
\begin{enumerate}[(A)]
    \item If  $(\mathfrak{L}_{\lambda}, \cdot)$ is non-nilpotent, then  $\gamma\neq0$ and we have:
\begin{center}
$e_{-1} \cdot e_{-1}=\gamma  e_{-1}, \
e_{-1} \cdot e_{0}= \gamma e_0, \
e_{-1} \cdot e_{j}= \gamma e_{j}, \
e_{-1} \cdot \check{e}_{j}=  \gamma \check{e}_{j}, \
e_j \cdot e_j= \check{e}_j \cdot \check{e}_j = -\frac{\gamma}{2\lambda_j} e_0,$
\end{center}
corresponding transposed Poisson algebra  $(\mathfrak{L}_{ \{\lambda, \gamma\}}, \cdot, [-,-])$ is  isomorphic only to  $(\mathfrak{L}_{ \{\lambda, -\gamma\}}, \cdot, [-,-])$; 
all these algebras are non-Poisson.

  \item If  $(\mathfrak{L}_{\lambda}, \cdot)$ is  nilpotent, then
  there is a sequence $\{ \beta_q \in \mathbb C_{>0} \}_{ 1\leq q \leq n},$ 
  where the first nonzero element is equal to $1,$
  and $(\mathfrak{L}_{\lambda}, \cdot)$ has the  following multiplication table:
\begin{enumerate}
    \item 
$e_{-1} \cdot e_{-1}=   e_{0} -2\sum\limits_{j=1}^n \lambda_j \beta_j \check{e}_{j}, \
e_{-1} \cdot \check{e}_{j}=  \beta_j \check{e}_{j},$

    \item 
$e_{-1} \cdot e_{-1}=    -2\sum\limits_{j=1}^n \lambda_j \beta_j \check{e}_{j}, \
e_{-1} \cdot \check{e}_{j}=  \beta_j \check{e}_{j}.$
\end{enumerate}
The corresponding transposed Poisson algebras  
$(\mathfrak{L}_{\{\lambda, \beta\}}, \cdot, [-,-])$ are not isomorphic;
and they are non-Poisson if there exists $\beta_j \neq0$.
\end{enumerate}

\end{thrm}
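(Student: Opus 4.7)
By Theorem~\ref{theodot}, every transposed Poisson structure on $(\mathfrak{L}_\lambda, [-,-])$ is parameterized by the tuple $(\gamma, \mu, \alpha_1, \ldots, \alpha_n, \beta_1, \ldots, \beta_n) \in \mathbb{C}^{2n+2}$. The strategy is to identify the orbits of this parameter space under the induced action of $\mathrm{Aut}(\mathfrak{L}_\lambda, [-,-])$, given by $x*y = \phi(\phi^{-1}(x) \cdot \phi^{-1}(y))$, and to select one canonical representative per orbit. Using Lemma~\ref{autgen}, $\phi$ is parameterized by a sign $\varepsilon \in \{\pm 1\}$, a common constant $\xi = \mu_i^2 + \check{\mu}_i^2$, rotation data $(\mu_i, \check{\mu}_i)_{i=1}^n$, and translation data $(\nu, \nu_i, \check{\nu}_i)_{i=1}^n$. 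A direct computation of $e_j * e_j = -\tfrac{\gamma^*}{2\lambda_j} e_0$ yields $\gamma^* = \varepsilon \gamma$, so the vanishing of $\gamma$ is an orbit invariant; this dichotomy produces Cases (A) and (B).

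In Case~A ($\gamma \ne 0$), I compute that the translation parameters $(\nu, \nu_j, \check{\nu}_j)$ enter the formulas for $(\mu^*, \alpha_j^*, \beta_j^*)$ linearly with coefficient $\gamma$; the resulting system is therefore invertible, and one can solve for the translation parameters to achieve $\mu^* = \alpha_j^* = \beta_j^* = 0$ for every $j$. This reduces each Case~A structure to the canonical form displayed in (A) with the single surviving parameter $\gamma^* = \varepsilon\gamma$. The remaining $\varepsilon$-ambiguity identifies $\gamma$ with $-\gamma$, giving the stated isomorphism classes; Theorem~\ref{theodot} ensures each of these is non-Poisson.

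In Case~B ($\gamma = 0$) the translations have no effect on $(\mu, \alpha_j, \beta_j)$ since their contribution is proportional to $\gamma$; only the rotation data and the sign $\varepsilon$ remain. A direct computation shows that $(\alpha_j, \beta_j)$ transforms by the matrix $M_j = \bigl(\begin{smallmatrix} \mu_j & \check{\mu}_j \\ -\check{\mu}_j & \mu_j \end{smallmatrix}\bigr)$ of determinant $\xi$, while $\mu$ transforms as $\mu^* = \xi \mu$; it follows that $\alpha_j^2 + \beta_j^2$ is rescaled by $\xi$. Choosing $(\mu_j, \check{\mu}_j)$ appropriately rotates $(\alpha_j, \beta_j)$ to $(0, \beta_j^*)$ with $(\beta_j^*)^2 = \xi(\alpha_j^2 + \beta_j^2)$, and the global scalar $\xi$ is then used either to normalize $\mu^* = 1$ (subcase~(a), when $\mu \neq 0$) or to normalize the first nonzero $\beta_q^*$ to $1$ (subcase~(b)). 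After these normalizations, the residual stabilizer consists of rotations with $\mu_j = \pm 1$, $\check{\mu}_j = 0$, which act as $\beta_j^* \mapsto \pm \beta_j^*$ independently for each $j$; picking each $\beta_j$ in the fundamental domain $\mathbb{C}_{>0}$ then yields a unique canonical form and proves non-isomorphism of the distinct sequences.

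The main obstacle is the bookkeeping in Case~B: one must carefully track the action of the family of complex orthogonal matrices $M_j$ (constrained by a common determinant $\xi$) together with the global rescaling $\xi$, and verify that each orbit meets the proposed normal form exactly once. The fundamental domain $\mathbb{C}_{>0}$ for $z \mapsto -z$ is precisely what captures the residual sign symmetries left after normalizing the first nonzero $\beta_q = 1$. The Poisson/non-Poisson dichotomy then follows immediately from the criterion in Theorem~\ref{theodot}: the algebras in (A) always have $\gamma \neq 0$, while those in (B) have some $\beta_j \neq 0$ exactly when the stated exception fails.
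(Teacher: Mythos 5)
Your proposal is correct and follows essentially the same route as the paper: reduce to the parameter tuple $(\gamma,\mu,\alpha_j,\beta_j)$ from Theorem~\ref{theodot}, act by the automorphisms of Lemma~\ref{autgen}, and normalize — using the translation data to kill $\mu,\alpha_j,\beta_j$ when $\gamma\neq0$ (with the sign of the automorphism identifying $\gamma$ with $-\gamma$), and using the rotation data $(\mu_j,\check{\mu}_j)$ and the global scalar $\xi$ to reach the normal forms (B.a)/(B.b) when $\gamma=0$, splitting on $\mu\neq0$ versus $\mu=0$. Your explicit tracking of the matrices $M_j$ and the residual stabilizer is a slightly more systematic packaging of the same computation, and the non-Poisson conclusion is drawn from Theorem~\ref{theodot} exactly as in the paper.
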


\begin{proof}
Let us consider the transposed Poisson structure  $(\mathfrak{L}_{\lambda}, \cdot , [-,-])$ 
on the generic oscillator Lie $(\mathfrak{L}_{\lambda},  [-,-])$ 
and $\cdot$ is given as in Theorem \ref{theodot}.
Under the action of a positive automorphism of the Lie algebra $(\mathfrak{L}_{\lambda},  [-,-]),$
given in Lemma \ref{autgen}, we rewrite the multiplication table of $(\mathfrak{L}_{\lambda}, \cdot)$ by the following way:

\begin{longtable}{lcl}
$e_{-1} \cdot e_{-1} $&$=$&$ \gamma e_{-1}+\left( \frac{\mu+\gamma \nu}{\xi}+\sum\limits_{i=1}^{n}\frac{\gamma(\nu_i^2+\check{\nu}_i^2)}{2 \lambda_i \xi}\right)e_0+$\\
& & \multicolumn{1}{r}{$\sum\limits_{k=1}^n\left( \frac{\gamma( \mu_k\nu_k- \check{\mu}_k\check{\nu}_k)-2\lambda_k(\alpha_k  \mu_k-\beta_k  \check{\mu}_k) }{\xi}\right)e_k+
\sum\limits_{k=1}^n\left( \frac{\gamma( \check{\mu}_k\nu_k+ \mu_k\check{\nu}_k)-2\lambda_k(\alpha_k  \check{\mu}_k+\beta_k  \mu_k) }{\xi}\right)\check{e}_k$}\\
$e_{-1} \cdot e_{0} $&$=$&$\gamma e_0$\\

$e_{-1} \cdot e_{k} $&$=$&$\left( \frac{\gamma( \mu_k\nu_k- \check{\mu}_k\check{\nu}_k)-2\lambda_k(\alpha_k  \mu_k-\beta_k  \check{\mu}_k) }{2\lambda_k\xi}\right)e_0 +\gamma e_k$\\

$e_{-1} \cdot \check{e}_{k} $&$=$&$\left( \frac{\gamma( \check{\mu}_k\nu_k+ \mu_k\check{\nu}_k)-2\lambda_k(\alpha_k  \check{\mu}_k+\beta_k  \mu_k) }{2\lambda_k\xi}\right)e_0 +\gamma \check{e}_k$\\
$e_k \cdot e_k$ & $ =$ & $ \check{e}_k \cdot \check{e}_k = -\frac{\gamma}{2\lambda_k} e_0  .$
\end{longtable}
Hence, we will consider two different situations.
\begin{enumerate} 
    \item[$\gamma\neq 0.$]
    By taking 
\begin{center}    $\nu_k=\frac{2\alpha_k \lambda_k}{\gamma}, $
    $\check{\nu}_k=\frac{2\beta_k \lambda_k}{\gamma} $ and
    $\nu= -\sum\limits_{i=1}^{n}\frac{\gamma(\nu_i^2+\check{\nu}_i^2)}{2 \gamma \lambda_i  }-\frac{\mu}{\gamma},$
\end{center}
we have a multiplication table from part $(A)$ in our statement.
It is easy to see that, under the action of a positive automorphism of the Lie algebra $(\mathfrak{L}_{\lambda},  [-,-]),$
given in Lemma \ref{autgen}, all  multiplications $\cdot$ give  non-isomorphic transposed Poisson structures. 
Under the action of a negative automorphism given in Lemma \ref{autgen}, we have the isomorphism between 
structures corresponding to $\gamma$ and $-\gamma.$
Insofar, $(\mathfrak{L}_{\lambda},   \cdot )$ is non-nilpotent if and only if $\gamma \neq 0$.

  \item[$\gamma= 0.$] In this case,     $(\mathfrak{L}_{\lambda},   \cdot )$ is a nilpotent algebra. Note that, 
  \begin{enumerate}[$\bullet$]
      \item 
        If   $\{ \alpha_k \}_{1 \leq k \leq n}=0,$ then transposed Poisson structures corresponding to 
        $\{\beta_1, \ldots, \beta_j, \ldots, \beta_n \}$ and 
        $\{\beta_1, \ldots, -\beta_j, \ldots, \beta_n \}$ are isomorphic.

      \item 
  If $(\alpha_k,\beta_k)\neq (0, 0)$ then after an action of a suitable automorphism, we can suppose that $\alpha_k\neq 0$ and $\beta_k\neq0.$ 
  In the case  $(\alpha_k,\beta_k)=(0, 0)$ after each automorphism action, we will take the same situation  $(\alpha_k,\beta_k)=(0, 0).$
   Let $t$ be the maximal number such that $(\alpha_1,\beta_1)=\ldots=(\alpha_{t-1},\beta_{t-1})=(0, 0).$
  \end{enumerate}
  We should consider two subcases.
  \begin{enumerate}

    \item [$\mu\neq0.$] 
    By taking 
\begin{center}
    $\xi=\mu ,$ $\check{\mu}_t=\alpha_t$ and $\mu_k=\beta_k$ for all $k $ such that $\alpha_k\beta_k\neq0,$ 
\end{center}
we obtain that $(\mathfrak{L}_{\lambda}, \cdot)$ has a multiplication table from the part $(B.a)$ of our statement.

   \item [$\mu=0.$] 
    By taking 
\begin{center}
     $\check{\mu}_t=\alpha_t$ and $\mu_k=\beta_k$ for all $k $ such that $\alpha_k \beta_k\neq0,$ 
\end{center}
we obtain that $(\mathfrak{L}_{\lambda}, \cdot)$ has a multiplication table from the part $(B.b)$ of our statement.

  \end{enumerate}
\end{enumerate}
\end{proof}

\section{Transposed Poisson structures on  finite-dimensional solvable Lie algebras}\label{solv}

It is proven that any finite-dimensional solvable Lie algebra over an algebraically closed field of zero characteristic admits non-trivial $\frac12$-derivations \cite{klv22}. For the solvable Lie algebra $\mathfrak L$ with non-zero annihilator, it is constructed non-trivial $\frac12$-derivation as $\varphi(x) = [x, \omega_0],$
by the element of $\omega_0 \in \operatorname{Ann}_{[\mathfrak{L},\mathfrak{L}]}([\mathfrak{L},\mathfrak{L}])$ such that $[x,\omega_0] =\lambda_x\omega_0$ for all $x \in \mathfrak{L},$ where $\lambda_x \neq 0$ for some $x.$ Thus, the dimension of the $\frac12$-derivations space of any solvable Lie algebra at least equal to two. 
In the following example, we give a solvable Lie algebra in which the dimension of the $\frac12$-derivations space is equal to two.

\subsection{Transposed Poisson structures on solvable Lie algebras with naturally graded filiform nilpotent radical}
All solvable Lie algebras with a naturally graded filiform nilpotent radical were found in \cite{sw05}.
Now we consider the  solvable Lie algebra $\mathfrak{s}_{n,2}$ with the following multiplication table:
\begin{longtable}{lcll lcll}
$[e_i, e_1] $&$=$&$ e_{i+1},$ \ &   $2 \leq i \leq n-1;$ &
$[e_1, x_1] $&$=$&$ e_1,$ \\
$[e_i, x_1] $&$=$&$ (i-2)e_i,$\  & $3 \leq i \leq n;$ &
$[e_i, x_2] $&$=$&$ e_i,$ \ &  $2 \leq i \leq n.$
\end{longtable}

Note that $\mathfrak{s}_{n,2}$ is a  solvable Lie algebra with nilpotent radical is the naturally graded filiform Lie algebra
$$\mathfrak{n}_{n,1} : [e_i, e_1]=e_{i+1}, \quad 2\leq i \leq n-1.$$

Moreover, $[\mathfrak{s}_{n,2},\mathfrak{s}_{n,2}] = \mathfrak{n}_{n,1}$ and $\operatorname{Ann}_{[\mathfrak{s}_{n,2},\mathfrak{s}_{n,2}]}([\mathfrak{s}_{n,2},\mathfrak{s}_{n,2}]) = \langle e_n\rangle.$ 
In the following proposition we give the description of $\frac 1 2$-derivations of the algebra $\mathfrak{s}_{n,2}.$

\begin{proposition}  
Any $\frac 1 2$-derivation $\varphi$ of the algebra $\mathfrak{s}_{n,2}$
has the form
$$\varphi(x_1) = \alpha x_1 + (n-2)\beta e_n, \quad 
\varphi(x_2) = \alpha x_2 + \beta e_n, \quad 
\varphi(e_k) = \alpha e_k, \quad 1 \leq k  \leq n.$$
\end{proposition}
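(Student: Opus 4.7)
\emph{Setup.} First observe that $[\mathfrak{s}_{n,2},\mathfrak{s}_{n,2}]=\langle e_1,\ldots,e_n\rangle$, and this subspace is preserved by every $\frac{1}{2}$-derivation by the remark following \cref{12der}. I will therefore write $\varphi(e_j)=\sum_{k=1}^{n}c^{j}_{k}e_k$ (with no $x_1,x_2$ components) and $\varphi(x_i)=\sum_{k=1}^{n}a^{i}_{k}e_k+p^{i}_{1}x_1+p^{i}_{2}x_2$ for $i\in\{1,2\}$, leaving the scalars $c^{j}_{k}$, $a^{i}_{k}$, $p^{i}_{m}$ to be determined.

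\emph{Step 1 (constraints from $x_2$).} Substituting $[e_i,x_2]=e_i$ for $2\le i\le n$ into the $\frac{1}{2}$-derivation identity and using that $x_2$ acts as the identity on $\langle e_2,\ldots,e_n\rangle$ while $[e_1,x_2]=0$, comparison of coefficients at each basis vector yields $c^{i}_{1}=0$, $c^{i}_{k}=0$ for $k\notin\{i,i+1\}$, together with $c^{i}_{i}=p^{2}_{1}\lambda_i+p^{2}_{2}$ and $c^{i}_{i+1}=a^{2}_{1}$ (for $i<n$), where $\lambda_i$ is the $x_1$-weight of $e_i$.

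\emph{Step 2 (constraints from $x_1$, and propagation).} The same device applied to $[e_i,x_1]=(i-2)e_i$ for $3\le i\le n$ and to $[e_1,x_1]=e_1$ gives a second family of equations on the same $c^{j}_{k}$. Cross-checking with Step~1 produces a polynomial identity $p^{2}_{1}t^{2}+(p^{2}_{2}-p^{1}_{1})t-p^{1}_{2}=0$ in $t=i-2$, together with $(i-3)a^{2}_{1}=a^{1}_{1}$; for $n\ge 5$ the range $t\in\{1,\ldots,n-2\}$ provides at least three values and immediately forces $p^{2}_{1}=p^{1}_{2}=0$, $p^{1}_{1}=p^{2}_{2}$, and $a^{1}_{1}=a^{2}_{1}=0$. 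The low-rank cases $n\in\{3,4\}$ are disposed of by also invoking the identity at $[x_1,x_2]=0$ and at $[e_2,e_1]=e_3$. Next, iterating the $\frac{1}{2}$-derivation identity on $[e_i,e_1]=e_{i+1}$ gives the recursion $2c^{j+1}_{j+1}=c^{j}_{j}+c^{1}_{1}$, which forces every diagonal entry to coincide with a single scalar $\alpha:=c^{1}_{1}=p^{1}_{1}=p^{2}_{2}$, so that $\varphi|_{\langle e_1,\ldots,e_n\rangle}=\alpha\cdot\mathrm{id}$.

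\emph{Step 3 (determining $\varphi$ on $x_1,x_2$, and main obstacle).} It remains to pin down the coefficients $a^{1}_{k},a^{2}_{k}$. Applying $\varphi$ to $[e_k,x_1]=(k-2)e_k$ and $[e_k,x_2]=e_k$ for $k\le n-1$, and using that $\varphi$ is already known to be scalar on the $e_k$'s, produces a linear system whose coefficient matrix is invertible because the $x_1$- and $x_2$-weights on $\langle e_1,\ldots,e_{n-1}\rangle$ are nonzero and suitably independent; hence $a^{1}_{k}=a^{2}_{k}=0$ for $k\le n-1$. At the top, the identity $[e_n,e_1]=0$ breaks the propagation chain, and only the single relation $a^{1}_{n}=(n-2)\,a^{2}_{n}$ survives from $[e_n,x_1]=(n-2)e_n$, $[e_n,x_2]=e_n$ and $[x_1,x_2]=0$. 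Setting $\beta:=a^{2}_{n}$ recovers exactly the two-parameter family in the statement, and a direct verification confirms every such map is a $\frac{1}{2}$-derivation. The main difficulty throughout is precisely this asymmetry at $e_n$: keeping track of it carefully, so that the $\beta$-family is neither spuriously enlarged in the generic argument nor accidentally killed when checking consistency with $[x_1,x_2]=0$, is the central bookkeeping challenge.
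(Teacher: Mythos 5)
The paper states this proposition without proof, so there is no printed argument to compare against; your write-up supplies one, its skeleton (restrict $\varphi$ to the derived algebra $\langle e_1,\dots,e_n\rangle$, extract a quadratic identity in $t=i-2$ from the $x_1$- and $x_2$-actions, propagate the diagonal via $2c^{j+1}_{j+1}=c^j_j+c^1_1$ coming from $[e_i,e_1]=e_{i+1}$) is sound, and the final two-parameter family is the correct one. However, Step 3 as written contains a genuine gap. For $k\ge 2$ the element $e_k$ centralizes $\langle e_2,\dots,e_n\rangle$, so applying the $\frac{1}{2}$-derivation identity to $[e_k,x_1]=(k-2)e_k$ and $[e_k,x_2]=e_k$ produces equations involving only $a^i_1$ (through $[e_k,e_1]=e_{k+1}$) and the $p^i_m$; these relations are blind to $a^i_j$ for $j\ge 2$, so the ``invertible linear system'' you invoke for those unknowns does not exist. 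The accompanying justification is also literally false: the $x_1$-weight of $e_2$ and the $x_2$-weight of $e_1$ are both zero.

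The coefficients $a^i_j$ with $2\le j\le n-1$ are in fact killed by the three brackets that do see them, namely those involving $e_1$ on the left together with $[x_1,x_2]=0$: the relation $[e_1,x_1]=e_1$ gives $a^1_{k-1}=(k-4)c^1_k$ for $k\ge 3$ (and $c^1_1=p^1_1$, $c^1_2=0$); the relation $[e_1,x_2]=0$ gives $a^2_{k-1}=c^1_k$ and $p^2_1=0$; and $[x_1,x_2]=0$ gives $a^2_1=0$, $a^1_2=0$ and $a^1_k=(k-2)a^2_k$ for $k\ge 3$. Comparing $(j-3)a^2_j=a^1_j=(j-2)a^2_j$ forces $a^1_j=a^2_j=0$ and hence $c^1_{j+1}=0$ for $2\le j\le n-1$, while for $j=n$ only $a^1_n=(n-2)a^2_n$ survives, which is exactly your $\beta$-family. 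Note that this also repairs a premature claim at the end of your Step 2: at that stage $\varphi(e_1)$ may still carry components along $e_3,\dots,e_n$ (they equal $a^2_2,\dots,a^2_{n-1}$), so $\varphi|_{\langle e_1,\dots,e_n\rangle}=\alpha\,\mathrm{id}$ is only established once the corrected Step 3 is carried out. With these local repairs the proof is complete and correct.
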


\begin{thrm} Let $(\mathfrak{s}_{n,2}, \cdot, [-,-])$ be a transposed Poisson algebra structure defined on the Lie
algebra $\mathfrak{s}_{n,2}.$ 
Then, up to isomorphism, there is only one non-trivial transposed Poisson algebra structure on $\mathfrak{s}_{n,2}$. It is given by
\begin{equation*}
 x_1 \cdot x_1 = (n-2)^2 e_n, \quad x_2 \cdot x_1 =  (n-2) e_n, \quad x_2 \cdot x_2 = e_n.   
\end{equation*}
This structure is non-Poisson.

\end{thrm}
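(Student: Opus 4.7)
The plan is to apply Lemma \ref{glavlem} to the description of $\frac{1}{2}$-derivations just stated. For each $\ell$ in the basis $\{x_1,x_2,e_1,\ldots,e_n\}$, write $\varphi_\ell(\cdot)=\ell\cdot(\cdot)$, so $\varphi_\ell$ is determined by two scalars $\alpha^\ell,\beta^\ell$ according to
\[
\varphi_\ell(x_1)=\alpha^\ell x_1+(n-2)\beta^\ell e_n,\quad \varphi_\ell(x_2)=\alpha^\ell x_2+\beta^\ell e_n,\quad \varphi_\ell(e_k)=\alpha^\ell e_k.
\]
Commutativity of $\cdot$ is encoded by $\varphi_{\ell_1}(\ell_2)=\varphi_{\ell_2}(\ell_1)$ for all pairs $\ell_1,\ell_2$; I would run through these identifications to pin down the parameters.

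First I would compare $\varphi_{x_1}(x_2)$ and $\varphi_{x_2}(x_1)$: this immediately forces $\alpha^{x_1}=\alpha^{x_2}=0$ and $\beta^{x_1}=(n-2)\beta^{x_2}$. Next, $\varphi_{x_1}(e_k)=\varphi_{e_k}(x_1)$ for each $k$ compares an element of $\langle e_k\rangle$ with one in $\langle x_1,e_n\rangle$, yielding $\alpha^{e_k}=0$ and $\beta^{e_k}=0$ for every $k$ (examining $k=n$ separately when $e_k$ and $e_n$ coincide). The remaining commutativity relations $\varphi_{e_i}(e_j)=\varphi_{e_j}(e_i)$ are then automatic. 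Setting $\beta:=\beta^{x_2}$ the whole multiplication collapses to
\[
x_1\cdot x_1=(n-2)^2\beta e_n,\qquad x_1\cdot x_2=(n-2)\beta e_n,\qquad x_2\cdot x_2=\beta e_n,
\]
with all other products zero. Associativity is then immediate because every nontrivial product lies in $\langle e_n\rangle\subseteq\operatorname{Ann}(\mathfrak{s}_{n,2}^{\,\cdot})$.

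It remains to reduce to a single non-trivial isomorphism class and to exclude the Poisson case. For the isomorphism reduction, I would exhibit an automorphism of the Lie algebra of the form $\phi(x_1)=x_1,\ \phi(x_2)=x_2,\ \phi(e_1)=\mu_1 e_1,\ \phi(e_i)=\mu_2\mu_1^{i-2}e_i$ ($i\geq 2$); a short check against the given brackets confirms this is an automorphism for arbitrary nonzero $\mu_1,\mu_2$. Applying it to the transposed Poisson multiplication as in the formula following Lemma \ref{glavlem} scales the parameter by $\mu_2\mu_1^{n-2}$, so if $\beta\neq 0$ we normalize to $\beta=1$, which is the form in the statement; if $\beta=0$ the structure is trivial.

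Finally, to see the structure is non-Poisson I would test the Poisson-Leibniz identity on the triple $(x_1,x_2,x_2)$: since $[x_1,x_2]=0$ in $\mathfrak{s}_{n,2}$, the right-hand side $[x_1,x_2]\cdot x_2+x_2\cdot[x_1,x_2]$ vanishes, whereas $[x_1,x_2\cdot x_2]=[x_1,e_n]=-(n-2)e_n\neq 0$. This contradiction completes the argument. The main technical step is the bookkeeping in the commutativity identifications — once the parameters collapse, associativity and the isomorphism reduction are straightforward; the Poisson check is a single bracket computation.
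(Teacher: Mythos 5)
Your argument is correct and follows essentially the same route as the paper's: invoke the key lemma to realize each multiplication operator as a $\frac{1}{2}$-derivation of the stated form, use commutativity of $\cdot$ to force all parameters to vanish except a single scalar $\beta$ with $\beta^{x_1}=(n-2)\beta^{x_2}$, and rescale $e_n$ by a Lie-algebra automorphism to normalize $\beta=1$. The only differences are cosmetic (you kill $\alpha^{x_1},\alpha^{x_2}$ via the $x_1,x_2$ comparison rather than via $\varphi_{x_1}(e_1)=\varphi_{e_1}(x_1)$, and you allow a two-parameter automorphism where the paper uses one), and you supply an explicit Leibniz-rule check of the non-Poisson claim that the paper's proof omits.
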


\begin{proof} 
Let $(\mathfrak{s}_{n,2}, \cdot, [-,-])$ be a transposed Poisson algebra structure defined on the Lie
algebra $\mathfrak{s}_{n,2}.$ Then for any element of 
$x\in \mathfrak{s}_{n,2},$ the operator of multiplication
$\varphi_x(y) = x \cdot y$ is a  $\frac 1 2$-derivation
of $(\mathfrak{s}_{n,2}, [-,-])$. 
Hence,  we have that

{\small  

\begin{longtable}{lcl lcl lcl} 
$\varphi_{x_1}(x_1)$&$ = $&$\alpha_{n+1}x_1 + (n-2)\beta_{n+1}e_n,$ & 
$\varphi_{x_1}(x_2)$&$ = $&$ \alpha_{n+1} x_2 + \beta_{n+1} e_n,$ & 
$\varphi_{x_1}(e_k)$&$ =$&$ \alpha_{n+1} e_k,$\\

$\varphi_{x_2}(x_1) $&$=$&$ \alpha_{n+2}x_1 + (n-2)\beta_{n+2}e_n,$ &  
$\varphi_{x_2}(x_2) $&$=$&$ \alpha_{n+2} x_2 + \beta_{n+2} e_n,$ & 
$\varphi_{x_2}(e_k) $&$=$&$ \alpha_{n+2} e_k,$\\

$\varphi_{e_i}(x_1) $&$=$&$ \alpha_{i} x_1 + (n-2)\beta_ie_n,$ & 
$\varphi_{e_i}(x_2) $&$=$&$ \alpha_i x_2 + \beta_i e_n,$ & 
$\varphi_{e_i}(e_k) $&$=$&$ \alpha_i e_k.$
\end{longtable}}

Then from $\alpha_i e_j = \varphi_{e_i}(e_j) = e_i \cdot  e_j = e_j \cdot  e_i  =  \varphi_{e_j}(e_i) = \alpha_j e_i,$ we have that $\alpha_i =0$ for $1 \leq i \leq n.$
The last  implies $e_i \cdot e_j =0 $ for all $1 \leq i, j \leq n.$
Next from 
\begin{longtable}{llllllllllllll}
    $\alpha_{n+1} e_1 $&$=$&$ \varphi_{x_1}(e_1) $&$=$&$ e_1 \cdot  x_1$&$ =$&$ x_1 \cdot  e_1 $&$ =$&$ \varphi_{e_1}(x_1) $&$= $&$(n-2)\beta_1 e_n,$\\
 $\alpha_{n+2} e_1 $&$=$&$ \varphi_{x_2}(e_1)$&$=$&$ e_1 \cdot  x_2$&$ =$&$ x_2 \cdot  e_1 $&$ =$&$ \varphi_{e_1}(x_2) $&$=$&$ \beta_1 e_n,$
 \end{longtable}
we have that $\alpha_{n+1} =\alpha_{n+2} =0.$
After that, from 
\begin{longtable}{lllllllllllll}
    $0$&$=$&$\alpha_{n+2} e_i $&$=$&$ \varphi_{x_2}(e_i)$&$ = $&$e_i \cdot  x_2 $&$=$&$ x_2 \cdot  e_i  $&$=$&$ \varphi_{e_i}(x_2) $&$=$&$ \beta_i e_n,$    
\end{longtable}we get that $\beta_i =0$ for $1 \leq i \leq n$
and $e_i \cdot x_1=e_i\cdot x_2=0.$
Finally, considering 
\begin{longtable}{llllllllllllllllllllll}
    $(n-2)\beta_{n+2}e_n $&$=$&$ \varphi_{x_2}(x_1) $&$=$&$ x_2 \cdot  x_1 $&$=$&$ x_1 \cdot x_2  $&$=$&$ \varphi_{x_1}(x_2) $&$=$&$ \beta_{n+1} e_n,$
\end{longtable}
we get that $\beta_{n+1} =(n-2)\beta_{n+2}.$ Thus, we obtain
\begin{longtable}{lllllllllll}
$x_1 \cdot x_1 $&$= $&$(n-2)^2 \gamma e_n,$ & 
$x_2 \cdot x_1 $&$= $&$ (n-2) \gamma e_n,$ & 
$x_2 \cdot x_2 $&$=$&$ \gamma e_n.$
\end{longtable}

Since we are interested only in non-trivial transposed Poisson algebra structures, we can suppose $\gamma \neq 0$ and via automorphism 
\begin{longtable}{llllllllll}
$\phi(x_1) $&$= $&$x_1,$ &$\phi(x_2) = x_2,$ & $\phi(e_1) = e_1,$ &  
$\phi(e_i) = \gamma^{-1}{e_i},$ \ & $2 \leq i \leq n,$
\end{longtable}
we conclude that these transposed Poisson algebra structures are isomorphic to the case  $\gamma =1.$
\end{proof}

\subsection{Transposed Poisson structures on  solvable Lie algebras with Heisenberg nilpotent radical}
All solvable Lie algebras with Heisenberg nilpotent radical were found in  \cite{rw93}.
In this subsection, we consider $(3n+2)$-dimensional solvable Lie algebra $L_{n,n+1}$ with the following multiplication table: 
\begin{longtable}{lcllcllcll}
$[e_{n+i}, e_i]$&$=$&$e_{2n+1},$ &    $[e_i, x_{i}] $&$ =$&$ e_i,$  & $[e_{n+i}, x_{i}] $&$ =$&$ -e_{n+i},$  & $1 \leq i \leq n,$\\
& & & $[e_i, x_{n+1}] $&$=$&$ e_i,$  & $[e_{2n+1}, x_{n+1}] $&$= $&$e_{2n+1},$ & $1 \leq i \leq n.$
\end{longtable}

Note that the nilpotent radical of this algebra is the $(2n+1)$-dimensional 
Heisenberg algebra ${\rm H}_n$ with a basis $\{e_1, e_2, \dots, e_{2n+1}\}.$ 
Here $[L_{n,n+1},L_{n,n+1}] = {\rm H}_n$ and $\operatorname{Ann}_{{\rm H}_n}({\rm H}_n) = \langle e_{2n+1}\rangle.$
In the following proposition we give the description of $\frac 1 2$-derivations of the algebra $L_{n,n+1}.$

\begin{proposition} \label{prop-Heisenberg} 
Any $\frac 1 2$-derivation $\varphi$ of the algebra $L_{n,n+1}$
has the form
{\small \begin{longtable}{lcll lcll lcl}
$\varphi(e_k)$&$ =$&$ \alpha e_k, $&$ 1 \leq k  \leq 2n+1; $&$ 
\varphi(x_k) $&$= $&$\alpha x_k, $&$ 1 \leq k  \leq n; $&$ 
\varphi(x_{n+1}) $&$= $&$\alpha x_{n+1} + \beta e_{2n+1}.$
\end{longtable}}
\end{proposition}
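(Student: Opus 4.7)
The argument I would give is a direct coefficient-matching one: parametrize $\varphi$ in the given basis and grind through the $\frac{1}{2}$-derivation identity on each defining relation in a systematic order. The first move is to invoke the general fact from \cref{prem} that the derived subalgebra $[L_{n,n+1}, L_{n,n+1}] = {\rm H}_n = \langle e_1, \ldots, e_{2n+1}\rangle$ is invariant under any $\frac{1}{2}$-derivation. This forces $\varphi(e_k)\in {\rm H}_n$ for every $k\in\{1,\ldots,2n+1\}$, so we may write $\varphi(e_k) = \sum_j a_{k,j}\, e_j$ (no $x$-components) and $\varphi(x_k) = \sum_j c_{k,j}\, e_j + \sum_\ell d_{k,\ell}\, x_\ell$.

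Next, I would apply $2\varphi([a,b]) = [\varphi(a),b]+[a,\varphi(b)]$ to each nonzero defining bracket: $[e_i,x_i] = e_i$, $[e_{n+i},x_i] = -e_{n+i}$, $[e_i,x_{n+1}] = e_i$, $[e_{2n+1},x_{n+1}] = e_{2n+1}$, and the Heisenberg relation $[e_{n+i},e_i] = e_{2n+1}$. Matching coefficients in each case pinpoints that $\varphi(e_i) \in \langle e_i, e_{2n+1}\rangle$, $\varphi(e_{n+i})\in \langle e_{n+i}, e_{2n+1}\rangle$ and $\varphi(e_{2n+1}) \in \langle e_{2n+1}\rangle$, and ties the diagonal scalars of $\varphi(e_k)$ to the diagonal scalars $d_{j,j}, d_{n+1,n+1}$ appearing in $\varphi(x_j)$ and $\varphi(x_{n+1})$.

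I would then close the argument using the vanishing brackets. Applying the identity to $[x_i,x_j]=0$, to $[e_i,x_j]=0$ and $[e_{n+k},x_j]=0$ for $j\neq i,k$ with $j\leq n$, and to $[e_{n+k},x_{n+1}]=0$ kills the off-diagonal $e$- and $x$-components of $\varphi(x_j)$, eliminates the tail cross-coefficients $a_{i,2n+1}$, $a_{n+i,2n+1}$, $c_{k,n+k}$, and forces all surviving diagonal scalars $a_{i,i}, a_{n+i,n+i}, a_{2n+1,2n+1}, d_{j,j}, d_{n+1,n+1}$ to coincide with a single value $\alpha$. The only degree of freedom that escapes is the coefficient $\beta$ of $e_{2n+1}$ in $\varphi(x_{n+1})$, which is unconstrained because $e_{2n+1}$ is central in ${\rm H}_n$ and appears in the required image of $[e_{2n+1},x_{n+1}]$.

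The main obstacle is purely bookkeeping: with $3n+2$ basis elements there are many free coefficients and relations to track, and one must ensure that every coefficient is constrained by at least one relation and that the constraints are used in an order that does not create circular dependencies. The strategy of exhausting the nonzero brackets first and closing with the vanishing brackets (especially the family $[e_{n+k},x_{n+1}]=0$, which is what ultimately identifies $d_{j,j}$ with $d_{n+1,n+1}$ and kills the cross-coefficients) keeps the bookkeeping manageable. A final direct verification that the displayed family is indeed a $\frac{1}{2}$-derivation completes the proof.
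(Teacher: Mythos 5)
Your proposal is correct, and it is exactly the method this paper uses for its analogous computations (e.g., the proof of Proposition \ref{dmatrix} for the oscillator algebra): restrict $\varphi$ using the invariance of $[\LL,\LL]$ under $\frac12$-derivations, then match coefficients in $2\varphi([a,b])=[\varphi(a),b]+[a,\varphi(b)]$ over all defining brackets, finishing with the vanishing ones. The paper states Proposition \ref{prop-Heisenberg} without proof, but your outline (including the observation that $[e_{n+i},x_{n+1}]=0$ is what forces all diagonal scalars to coincide and that only the $e_{2n+1}$-coefficient of $\varphi(x_{n+1})$ survives) checks out when carried through.
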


\begin{thrm} 
Let $(L_{n,n+1}, \cdot, [-,-])$ be a transposed Poisson algebra structure defined on the Lie
algebra $L_{n,n+1}.$ 
Then, up to isomorphism, there is only one non-trivial transposed Poisson algebra structure on $L_{n,n+1}$. It is given by
 
$$x_{n+1} \cdot x_{n+1} =  e_{2n+1}.$$

This structure is non-Poisson.
\end{thrm}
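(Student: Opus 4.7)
The plan is to mimic the calculation already carried out for the oscillator algebra. By \cref{glavlem} together with \cref{prop-Heisenberg}, for every basis element $z$ of $L_{n,n+1}$ the operator $\varphi_z(y) := z \cdot y$ is a $\frac{1}{2}$-derivation and so is determined by two scalars $\alpha^z, \beta^z$ with
\[
\varphi_z(e_k) = \alpha^z e_k,\quad \varphi_z(x_k) = \alpha^z x_k\ (k \le n),\quad \varphi_z(x_{n+1}) = \alpha^z x_{n+1} + \beta^z e_{2n+1}.
\]
Commutativity of $\cdot$ becomes a single family of relations $\varphi_{z_1}(z_2) = \varphi_{z_2}(z_1)$ on basis pairs $(z_1,z_2)$, and the strategy is simply to feed each pair into these relations and read off which parameters survive.

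First I would run through the pairs of $e$-type vectors: $\alpha^{e_i} e_j = \alpha^{e_j} e_i$ for $i \neq j$ immediately forces $\alpha^{e_i} = 0$ for every $i$. The same comparison applied to pairs $(e_i, x_j)$ with $j \le n$ kills $\alpha^{x_j}$ for $j \le n$; comparing $(e_i, x_{n+1})$ for $i \neq 2n+1$ then kills both $\alpha^{x_{n+1}}$ and $\beta^{e_i}$ for $i \neq 2n+1$, after which the case $i = 2n+1$ disposes of $\beta^{e_{2n+1}}$. Finally, the pairs $(x_i, x_{n+1})$ with $i \le n$ yield $\beta^{x_i} = 0$, while nothing constrains $\beta^{x_{n+1}}=:\gamma$. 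Thus the only potentially nonzero product is $x_{n+1} \cdot x_{n+1} = \gamma e_{2n+1}$, and both commutativity and associativity are then automatic because every further product involving $e_{2n+1}$ (which lies in the annihilator of the resulting commutative algebra) vanishes.

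It remains to normalize and to verify that the structure is non-Poisson. For $\gamma \neq 0$ the diagonal linear map $\phi$ sending $e_{n+i} \mapsto \gamma^{-1} e_{n+i}$ and $e_{2n+1} \mapsto \gamma^{-1} e_{2n+1}$ while fixing the remaining basis vectors is easily checked to be a Lie automorphism of $L_{n,n+1}$ (it rescales $[e_{n+i},e_i] = e_{2n+1}$ and $[e_{2n+1},x_{n+1}] = e_{2n+1}$ consistently and leaves the $x_i$-brackets untouched), and the induced transposed Poisson structure satisfies $x_{n+1} * x_{n+1} = e_{2n+1}$, giving the stated normal form. The Poisson axiom fails because $[x_{n+1}, x_{n+1} \cdot x_{n+1}] = [x_{n+1}, e_{2n+1}] = -e_{2n+1}$ while the Leibniz right-hand side $[x_{n+1},x_{n+1}] \cdot x_{n+1} + x_{n+1} \cdot [x_{n+1},x_{n+1}]$ vanishes. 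The only mild obstacle is combinatorial bookkeeping: one must extract the constraints on the $\alpha^z$ before the $\beta^z$ become accessible, and pay attention in low rank (e.g.\ $n=1$) where fewer comparison pairs are available, but no genuinely new idea beyond the $\frac{1}{2}$-derivation template of \cref{prop-Heisenberg} is required.
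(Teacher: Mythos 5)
Your proposal is correct and follows essentially the same route as the paper: reduce each multiplication operator to the two-parameter form of \cref{prop-Heisenberg}, exploit commutativity on basis pairs to kill all parameters except $\beta^{x_{n+1}}$, and rescale $e_{n+1},\dots,e_{2n+1}$ by $\gamma^{-1}$ to normalize. Your explicit verification that the Leibniz rule fails on $[x_{n+1}, x_{n+1}\cdot x_{n+1}]$ is a welcome addition the paper leaves implicit, but the argument is otherwise the paper's own.
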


\begin{proof} 
Let $(L_{n,n+1}, \cdot, [-,-])$ be a transposed Poisson algebra structure defined on the Lie
algebra $L_{n,n+1}.$ 
Then for any element of $x\in L_{n,n+1},$ 
we have the multiplication operator  
$\varphi_x(y) = x \cdot y$ is a  $\frac 1 2$-derivation of     $(L_{n,n+1}, [-,-]).$ Hence, we have that
{\small \begin{longtable}{lcll lcl}
$\varphi_{e_i}(e_k)$&$ = $&$\alpha_i e_k,$ & $1 \leq k  \leq 2n+1;$  \\
$\varphi_{e_i}(x_k)$&$ = $&$\alpha_i x_k,$ & $1 \leq k  \leq n+1;$ & $\varphi_{e_i}(x_{n+1}) $&$=$&$ \alpha_{i} x_{n+1} + \beta_ie_{2n+1},$ \\
$\varphi_{x_j}(e_k)$&$ =$&$ \alpha_{2n+1+j}e_k,$ & $1 \leq k  \leq 2n+1;$\\
$\varphi_{x_j}(x_k) $&$=$&$ \alpha_{2n+1+j} x_k,$ & $1 \leq k  \leq n;$ & 
$\varphi_{x_j}(x_{n+1}) $&$= $&$\alpha_{2n+1+j}x_{n+1} + \beta_{2n+1+j}e_{2n+1}.$
\end{longtable}}

Then from 
\begin{longtable}{lclclclclclc}
    $\alpha_i e_k $&$= $&$\varphi_{e_i}(e_k)$&$ =$&$ e_i \cdot  e_k $&$=$&$ e_k \cdot  e_i  $&$= $&$ \varphi_{e_k}(e_i) $&$=$&$ \alpha_k e_i,$    
\end{longtable}we have that $\alpha_i =0$ for $1 \leq i \leq 2n+1.$ Next from 
\begin{longtable}{lclclclclclc}
$\alpha_{2n+1+j} e_1 $&$=$&$ \varphi_{x_j}(e_1)$&$ =$&$ e_1 \cdot  x_j $&$=$&$ x_j \cdot  e_1  $&$=$&$ \varphi_{e_1}(x_j) $&$=$&$ 0,$
\end{longtable}
we have that $\alpha_{2n+1+j} =0$ for $1 \leq j \leq n.$ 
After then considering 
\begin{longtable}{lclclclclcl}
$\alpha_{3n+2} e_i$&$ =$&$ \varphi_{x_{n+1}}(e_i)$&$ =$&$ e_i \cdot  x_{n+1} $&$=$&$ x_{n+1} \cdot  e_i  $&$=$&$ \varphi_{e_i}(x_{n+1})$&$ =$&$ \beta_i e_{2n+1},$
\end{longtable}
we derive that $\alpha_{3n+2}=0$ and $\beta_i =0$ for $1 \leq i \leq 2n+1.$
Finally, considering 
\begin{longtable}{lclclclclcl}
$\beta_{2n+1+j}e_{2n+1} $&$=$&$ \varphi_{x_j}(x_{n+1}) $&$=$&$ x_j \cdot  x_{n+1} $&$ =$&$ x_{n+1} \cdot x_j  $&$= $&$\varphi_{x_{n+1}}(x_j) $&$= $&$0,$
\end{longtable} we get that $\beta_{2n+1+j} = 0$ for $1 \leq j \leq n$ and $x_i \cdot x_j = 0,$ $ 1 \leq i\leq n+1,$ $1 \leq j\leq n.$
Thus, we have that
$$ \quad x_{n+1} \cdot x_{n+1} = \gamma e_{2n+1}.$$

Since we are interested only in non-trivial transposed Poisson algebra structures, we have that $\gamma \neq 0$ and via  automorphism 
{\small \begin{longtable}{lclclclclclc}
$ \phi(e_i)$&$ =$&$ e_i,$ \ & $1 \leq i \leq n;$
     & $\phi(e_j) $&$= $&$\gamma^{-1}{e_j} ,$ \ & $n+1 \leq j \leq 2n+1;$ &
     $\phi(x_k) = x_k,$ \ & $1 \leq k \leq n+1,$
\end{longtable}}
we can suppose that $\gamma =1.$
\end{proof}

\subsection{Transposed Poisson structures on 
solvable Lie algebras with abelian nilpotent radical}

Now we consider solvable Lie algebras with abelian nilpotent radical and maximal complementary vector space
(these algebras were found in \cite{nw94}).
It is known that the maximal dimension of complementary space for solvable Lie algebras with $n$-dimensional abelian nilpotent radical is equal to $n.$ Moreover, up to isomorphism there exists only one such solvable Lie algebra with the following multiplications:
$$L_n: \left[e_i, x_i\right]=e_i, \quad 1 \leq i \leq n,$$
where $\{e_1,   \dots, e_n, x_1,  \dots, x_n\}$ is a basis of $L_n.$
In the following proposition we give the description of $\frac 1 2$-derivations of the algebra $L_{n}.$
\begin{proposition} Any $\frac 1 2$-derivation $\varphi$ of the algebra $L_{n}$
has the form
\begin{longtable}{lllllllll}
$\varphi(e_i) $&$=$&$ \alpha_i e_i,$  \  
& $\varphi(x_i) $&$= $&$ \alpha_i x_i + \beta_{i} e_i,$ \ $ 1 \leq i  \leq n.$
\end{longtable}
\end{proposition}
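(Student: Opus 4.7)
The plan is to write $\varphi$ in the basis $\{e_1,\ldots,e_n,x_1,\ldots,x_n\}$, use the structural invariance $\varphi([L_n,L_n])\sst [L_n,L_n]$ to kill half of the coefficients, and then extract the remaining constraints by evaluating the $\frac 12$-derivation identity on the few non-trivial brackets of $L_n$, namely $[e_i,x_i]=e_i$ and the vanishing brackets $[e_i,x_j]=0$ ($i\neq j$) and $[x_i,x_j]=0$.

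First I would write
\begin{align*}
\varphi(e_j) &= \sum_i a_{ij}e_i + \sum_i b_{ij}x_i, &
\varphi(x_j) &= \sum_i c_{ij}e_i + \sum_i d_{ij}x_i.
\end{align*}
Since $[L_n,L_n]=\gen{e_1,\ldots,e_n}$ and this ideal is invariant under every $\frac12$-derivation (as noted after \cref{12der}), one immediately obtains $b_{ij}=0$ for all $i,j$, so $\varphi(e_j)\in\gen{e_1,\ldots,e_n}$.

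Next I would plug $[e_j,x_j]=e_j$ into the $\frac12$-derivation identity: the terms $[\varphi(e_j),x_j]$ and $[e_j,\varphi(x_j)]$ are easily computed using $[e_i,x_j]=\dl_{ij}e_j$, and comparing coefficients yields $a_{ij}=0$ for $i\neq j$ and $a_{jj}=d_{jj}=:\alpha_j$. Then applying the identity to $[e_j,x_k]=0$ with $j\neq k$ gives $d_{jk}=0$, so the $x$-part of $\varphi(x_k)$ is already diagonal. Finally, applying it to $[x_j,x_k]=0$ with $j\neq k$ gives the relation $c_{kj}e_k-c_{jk}e_j=0$, from which $c_{ij}=0$ for $i\neq j$, and we set $\beta_i:=c_{ii}$. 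Putting everything together yields the claimed form.

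There is no real obstacle here: the only mildly subtle step is keeping track of which index runs over which basis element when expanding the brackets, and noticing that the $\frac12$-derivation identity applied to the many vanishing brackets forces the off-diagonal coefficients in both the $e$-part of $\varphi(x_j)$ and the $x$-part of $\varphi(x_j)$ to be zero independently. A final consistency check that any map of the asserted form actually satisfies the $\frac12$-derivation identity on $[e_i,x_i]=e_i$ (giving $\alpha_i e_i = \tfrac12(\alpha_i e_i + \alpha_i e_i)$) and on all vanishing brackets confirms the characterization.
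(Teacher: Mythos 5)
Your proof is correct. The paper actually states this proposition without proof, and your argument is precisely the routine computation the authors carry out for the analogous results (Propositions on $\mathfrak{s}_{n,2}$, $L_{n,n+1}$, and the oscillator algebras): expand $\varphi$ in the given basis, use the invariance of $[\LL,\LL]=\gen{e_1,\dots,e_n}$ to remove the $x$-components of $\varphi(e_j)$, and evaluate the $\frac{1}{2}$-derivation identity on the brackets $[e_j,x_j]=e_j$, $[e_j,x_k]=0$ and $[x_j,x_k]=0$ to force diagonality; so it fills the omitted proof in exactly the paper's spirit, and your final verification that every map of the stated form is indeed a $\frac{1}{2}$-derivation completes the "if and only if" implicit in the statement.
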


\begin{thrm}\label{thmL_n} Let $(L_n, \cdot, [-,-])$ be a transposed Poisson algebra structure defined on the Lie
algebra $L_{n}.$ Then the multiplication  of $(L_n, \cdot)$ has the following form:
$$e_i\cdot x_i=\mu_{i}e_{i}, \quad  x_i\cdot x_i = \mu_{i}x_i + \tau_{i}e_{i}, \quad  1 \leq i \leq n.$$
\end{thrm}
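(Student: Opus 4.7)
The plan is to apply Lemma \ref{glavlem}: for every $\ell$ in the basis $\{e_1,\ldots,e_n,x_1,\ldots,x_n\}$, the left-multiplication operator $\varphi_{\ell}(y)=\ell\cdot y$ is a $\frac{1}{2}$-derivation of $(L_n,[-,-])$. The preceding proposition pins down each such $\varphi_{\ell}$ by $2n$ scalars, namely $\varphi_{\ell}(e_i)=\alpha^{\ell}_i e_i$ and $\varphi_{\ell}(x_i)=\alpha^{\ell}_i x_i+\beta^{\ell}_i e_i$ for $1\leq i\leq n$. So the goal reduces to determining which collections of scalars $\{\alpha^{\ell}_i,\beta^{\ell}_i\}$ are mutually consistent with the commutativity of $\cdot$.

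The next step is to exploit $\varphi_{\ell}(m)=\varphi_m(\ell)$ for all ordered pairs of basis vectors. First, $\varphi_{e_j}(e_i)=\varphi_{e_i}(e_j)$ gives $\alpha^{e_j}_i e_i=\alpha^{e_i}_j e_j$, which forces $\alpha^{e_j}_i=0$ for $i\neq j$; second, the off-diagonal identity $\varphi_{x_j}(e_i)=\varphi_{e_i}(x_j)$ (with $i\neq j$) has left-hand side in $\langle e_i\rangle$ and right-hand side in $\langle x_j,e_j\rangle$, so both $\alpha^{x_j}_i=0$ and $\beta^{e_i}_j=0$. The diagonal case $\varphi_{x_i}(e_i)=\varphi_{e_i}(x_i)$ reads $\alpha^{x_i}_i e_i=\alpha^{e_i}_i x_i+\beta^{e_i}_i e_i$, which kills $\alpha^{e_i}_i$ and identifies $\alpha^{x_i}_i=\beta^{e_i}_i$; denote this common value by $\mu_i$. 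Finally, $\varphi_{x_j}(x_i)=\varphi_{x_i}(x_j)$ for $i\neq j$ eliminates the remaining off-diagonal parameters $\alpha^{x_j}_i$ and $\beta^{x_j}_i$. Setting $\tau_i:=\beta^{x_i}_i$ collects the surviving data into $e_i\cdot x_i=\mu_i e_i$ and $x_i\cdot x_i=\mu_i x_i+\tau_i e_i$, with all other basis products zero, exactly as stated.

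Finally, one should verify that the resulting bilinear operation is associative; this is a short case analysis on triples of basis vectors, and because the non-trivial products live inside each $2$-dimensional block $\langle e_i,x_i\rangle$, only the identities $(e_i\cdot x_i)\cdot x_i=e_i\cdot(x_i\cdot x_i)$ and $(x_i\cdot x_i)\cdot x_i=x_i\cdot(x_i\cdot x_i)$ need to be checked, both of which reduce to $\mu_i^2 e_i$ and $\mu_i^2 x_i+2\mu_i\tau_i e_i$ on either side. The main (and essentially only) obstacle is the bookkeeping in separating diagonal from off-diagonal constraints in the commutativity step; no deeper structural argument is required, and the theorem statement does not ask for an isomorphism classification, so the argument stops once the form is extracted.
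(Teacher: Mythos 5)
Your proposal is correct and follows essentially the same route as the paper: both reduce the problem via Lemma \ref{glavlem} to the form of $\frac{1}{2}$-derivations given in the preceding proposition and then impose the symmetry $\varphi_{\ell}(m)=\varphi_{m}(\ell)$ on pairs of basis vectors to kill all off-diagonal and diagonal parameters except $\mu_i$ and $\tau_i$. Your explicit separation of the diagonal and off-diagonal comparisons, and the closing associativity check on each block $\langle e_i,x_i\rangle$, merely spell out steps the paper leaves implicit.
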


\begin{proof} $(L_n, \cdot, [-,-])$ be a transposed Poisson algebra structure defined on the Lie
algebra $L_{n}.$ Then for any element of $x\in L_n,$ we have that operator of multiplication
$\varphi_x(y) = x \cdot y$ is a  $\frac 1 2$-derivation. Hence,
for $ 1 \leq i,k  \leq n$ we derive
\begin{longtable}{lcll lcll}
$\varphi_{e_i}(e_k) $&$=$&$ \alpha_{i,k} e_k,$ &
$ \varphi_{e_i}(x_k) $&$= $&$\alpha_{i,k} x_k + \beta_{i,k}e_{k},$\\
$\varphi_{x_i}(e_k) $&$=$&$ \gamma_{i,k} e_k,$&
$ \varphi_{x_i}(x_k) $&$=$&$ \gamma_{i,k}x_k + \delta_{i,k}e_{k}.$
\end{longtable}


Thus, we have that
$$e_i\cdot e_k = 0, \quad 1 \leq i,k  \leq n; 
\quad e_j\cdot x_k=x_j\cdot x_k =0, \quad  1 \leq j\neq k \leq n.$$ 

Therefore, non-zero multiplications of the algebra $(L_n, \cdot)$ have the  form:
$$e_i\cdot x_i=\beta_{i,i}e_{i}, \quad x_i\cdot x_i = \beta_{i,i}x_i + \delta_{i,i}e_{i}, \quad  1 \leq i  \leq n.$$
\end{proof}

 \begin{remark} From Theorem \ref{thmL_n} we obtain that transposed Poisson algebra structure defined on the Lie
algebra $L_{n}$ is a direct sum of two-dimensional transposed Poisson algebras ${\rm TP}_i^{\mu_i, \tau_i},$ where
$${\rm TP}_i^{\mu_i, \tau_i}: \quad  
\left[e_i, x_i\right]=e_i, \quad e_i\cdot x_i=\mu_i e, \quad  x_i\cdot x_i = \mu_i x_i  + \tau_i e_i.$$
It is not difficult to see that any transposed Poisson algebra from the class ${\rm TP}_i^{\mu_i, \tau_i}$ is isomorphic one of the following non-isomorphic algebras
\begin{center}
    ${\rm TP}_1^\mu = {\rm TP}_i^{\mu, 0}$ 
    or ${\rm TP}_2={\rm TP}_i^{0, 1}.$
\end{center}

Therefore, we obtain that transposed Poisson algebra structure defined on the Lie algebra $L_{n}$ is a direct sum of some copies of algebras 
${\rm TP}_1^{\mu}$ and ${\rm TP}_2.$ 
 \end{remark}

\section{Transposed Poisson structure on perfect Lie algebras}

In this section, we consider transposed Poisson structure for some finite-dimensional Lie algebras which is a semi-direct sum of semisimple and solvable algebras, i.e., $\mathfrak{L} = \mathfrak{s} \ltimes \mathfrak{r}$. More precisely, we consider Lie algebras with three-dimensional simple parts and the solvable radical is an irreducible representation.  
It is known that if the  simple algebra $\mathfrak{sl}_2$ has an $(m+1)$-dimensional irreducible representation  $\mathfrak{r},$ where $m \geq 2$ (see, \cite{jac62}), 
then we have the algebra $\mathfrak L^m = \mathfrak{sl}_2 \ltimes \mathfrak r$ with  the following multiplication: 
{\small 
\begin{longtable}{lcllcllcl} 
$[e,f]$&$=$&$h,$ & $[h,e]$&$=$&$2e,$ & $[f,h]$&$=$&$2f,$\\  
$[x_k,h]$&$=$&$(2k-m)x_k,$ & \multicolumn{6}{l}{$0 \leq k \leq m ,$}\\
$[x_k,f]$&$=$&$x_{k+1},$  & \multicolumn{6}{l}{$0 \leq k \leq m-1,$} \\
 $[x_k,e]$&$=$&$k(m+1-k)x_{k-1},$ & \multicolumn{6}{l}{$1 \leq k \leq m.$}
 \end{longtable}}

\begin{proposition} \label{prop29}  If  $m> 2$, then any $\frac 1 2$-derivation of $\mathfrak L^m$ is trivial.
 \end{proposition}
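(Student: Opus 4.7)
The plan is to decompose $\varphi$ along the vector-space splitting $\mathfrak{L}^m=\mathfrak{sl}_2\oplus\mathfrak{r}$ and analyse each component in turn. Writing $\pi_1,\pi_2$ for the projections, and using that $[\mathfrak{sl}_2,\mathfrak{sl}_2]\subseteq\mathfrak{sl}_2$, $[\mathfrak{sl}_2,\mathfrak{r}]\subseteq\mathfrak{r}$, and $\mathfrak{r}$ is abelian, a direct check shows that $\pi_1\circ\varphi|_{\mathfrak{sl}_2}$ is a $\tfrac{1}{2}$-derivation of $\mathfrak{sl}_2$. A short calculation on the generators $e,f,h$ (or an appeal to Filippov's classification of $\delta$-derivations of simple Lie algebras) shows that every $\tfrac{1}{2}$-derivation of $\mathfrak{sl}_2$ is a scalar multiple of the identity, hence $\pi_1\circ\varphi|_{\mathfrak{sl}_2}=\alpha\cdot\mathrm{id}$ for some $\alpha\in\mathbb{C}$. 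Replacing $\varphi$ by $\varphi-\alpha\cdot\mathrm{id}$, it suffices to prove that a $\tfrac{1}{2}$-derivation with $\varphi(\mathfrak{sl}_2)\subseteq\mathfrak{r}$ must vanish identically.

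Set $\psi:=\varphi|_{\mathfrak{sl}_2}\colon\mathfrak{sl}_2\to\mathfrak{r}$; the $\tfrac{1}{2}$-derivation identity on pairs from $\mathfrak{sl}_2$ becomes $\psi[x,y]=\tfrac{1}{2}(x\cdot\psi(y)-y\cdot\psi(x))$, with $\cdot$ the standard action on the $(m+1)$-dimensional irreducible $\mathfrak{sl}_2$-module $\mathfrak{r}$. Expanding $\psi(e),\psi(f),\psi(h)$ in the basis $\{x_0,\ldots,x_m\}$ and matching coefficients in the three identities coming from $[e,f]=h$, $[h,e]=2e$, $[h,f]=-2f$ produces three families of linear relations among the coefficients. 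I would eliminate the coefficients of $\psi(h)$ and $\psi(e)$ in favour of those of $\psi(f)$; after the substitutions, the coefficient $b_{k+2}$ of $x_{k+2}$ in $\psi(f)$ must satisfy an equation whose residual factor, after all cancellations, simplifies to $(m+4)(m-2)$. For $m>2$ this quantity is nonzero, forcing every $b_k$ to vanish; the remaining coefficients of $\psi(e)$ and $\psi(h)$ then cascade to zero, giving $\psi\equiv 0$.

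Next, for $v\in\mathfrak{r}$ write $\varphi(v)=P(v)+Q(v)$ with $P(v)\in\mathfrak{sl}_2$ and $Q(v)\in\mathfrak{r}$. Because $\psi=0$, the identity $\varphi[x,v]=\tfrac{1}{2}[x,\varphi(v)]$ for $x\in\mathfrak{sl}_2$, $v\in\mathfrak{r}$ separates into $P(x\cdot v)=\tfrac{1}{2}[x,P(v)]$ and $Q(x\cdot v)=\tfrac{1}{2}\,x\cdot Q(v)$, while the identity on $\mathfrak{r}\times\mathfrak{r}$ collapses to the symmetry $P(v)\cdot w=P(w)\cdot v$. Writing $Q(x_k)=\sum_j q_{jk}x_j$, the $h$-equation forces $q_{jk}(m+2j-4k)=0$, and the $f$-equation gives the triangular recursion $q_{j,k+1}=\tfrac{1}{2}q_{j-1,k}$ with $q_{0,k+1}=0$; everything is thereby reduced to $Q(x_0)$, which in turn vanishes because at $k=0$ the factor $m+2j$ is strictly positive. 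For $P(x_k)=\alpha_ke+\beta_kf+\gamma_kh$, the $h$-equation gives the eigenvalue constraints $\alpha_k(m-2k-1)=\beta_k(m-2k+1)=\gamma_k(m-2k)=0$, and the $f$-equation yields $\alpha_{k+1}=0$, $\beta_{k+1}=-\gamma_k$, $\gamma_{k+1}=\alpha_k/2$; splitting according to the parity of $m$ and combining these relations forces $P\equiv 0$, completing the proof.

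The main obstacle is the linear system for $\psi$: the bookkeeping is intricate, and the clean obstruction $(m+4)(m-2)$ only surfaces after several rounds of elimination. The hypothesis $m>2$ is essential because for $m=2$ the factor $(m-2)$ vanishes and new $\tfrac{1}{2}$-derivations do appear, reflecting the fact that $\mathfrak{r}$ is isomorphic to the adjoint representation of $\mathfrak{sl}_2$ in that case.
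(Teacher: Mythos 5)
Your proposal is correct, and it reorganizes the argument in a way that differs genuinely from the paper's proof, even though the decisive computation is the same. The paper works entirely in coordinates on the full basis $\{e,f,h,x_0,\dots,x_m\}$: it derives $\varphi(h)$ from $[e,f]=h$, extracts the relations (8)--(9) from $[h,e]=2e$ and $[f,h]=2f$, combines them into $(m-2)(m+4)\beta_{2,k}=0$, and then pins down $\varphi(x_0)$ via $[x_0,e]=0$ and $[h,x_0]$ before propagating along $f$. You instead split $\varphi$ by the module decomposition $\mathfrak{L}^m=\mathfrak{sl}_2\oplus\mathfrak{r}$, dispose of the $\mathfrak{sl}_2\to\mathfrak{sl}_2$ block by the triviality of $\frac12$-derivations of a simple Lie algebra (a short direct check for $\mathfrak{sl}_2$, and a legitimate appeal to Filippov), and then treat the three remaining blocks $\psi\colon\mathfrak{sl}_2\to\mathfrak{r}$, $P\colon\mathfrak{r}\to\mathfrak{sl}_2$, $Q\colon\mathfrak{r}\to\mathfrak{r}$ separately; your recursions for $P$ and $Q$ are correct and your endgame forcing $P\equiv Q\equiv 0$ goes through (indeed without using $m>2$, which is consistent with Proposition \ref{prop30}, where the exceptional $\frac12$-derivations of $\mathfrak{L}^2$ live precisely in the $\mathfrak{sl}_2\to\mathfrak{r}$ block). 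What your organization buys is transparency: it isolates exactly where the hypothesis $m>2$ enters, namely the obstruction $(m+4)(m-2)$ for $\psi$, which is the same factor as in the paper's combination of \eqref{eq8} and \eqref{eq9}. The one place where you only assert rather than compute is precisely that elimination for $\psi$; since it coincides with the paper's equations \eqref{eq8}--\eqref{eq9} and their combination, this is a matter of unexecuted bookkeeping rather than a gap, but in a final write-up you would need to display those two families of relations explicitly.
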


\begin{proof} Let $\varphi$ be a $\frac 1 2$-derivation of $\mathfrak L^m.$ Put
\begin{longtable}{llllll}
$\varphi(e)$&$ =$&$ \alpha^1_1 e + \alpha^1_2 f + \alpha^1_3 h + \sum\limits_{k=0}^m\beta^1_{k}x_k,$ & 
$\varphi(f) $&$=$&$ \alpha^2_1 e + \alpha^2_2 f + \alpha^2_3 h + \sum\limits_{k=0}^m\beta^2_{k}x_k.$
\end{longtable}

Then from $\varphi(h) = \varphi([e,f]) = \frac 1 2 \big([\varphi(e),f] + [e, \varphi(f)]\big),$ we have that
{\small \begin{longtable}{lll}
    $\varphi(h)$&$ = $&$-\alpha^2_3 e - \alpha^1_3 f + \frac{\alpha^1_1+\alpha^2_2} 2 h - \frac m 2 \beta^2_{1}x_0  + \frac 1 2 \sum\limits_{k=1}^{m-1}\Big(\beta^1_{k-1} - (k+1)(m-k)\beta^2_{k+1} \Big) x_{k} + \frac 1 2 \beta^1_{m-1}x_{m}.$
    \end{longtable}}

Consider
{\small
\begin{longtable}{lcl}
$\varphi([h,e])$&$ = $&$\frac 1 2 \big([\varphi(h), e] + [h, \varphi(e)]\big)=$\\
&$=$& 
$\frac 1 2 \Big([-\alpha^2_3 e - \alpha^1_3 f + \frac{\alpha^1_1+\alpha^2_2} 2 h - \frac m 2 \beta^2_{1}x_0  +  \sum\limits_{k=1}^{m-1}\frac{\beta^1_{k-1} - (k+1)(m-k)\beta^2_{k+1}}{2} x_{k} + \frac{ \beta^1_{m-1}}{2}x_{m}, e] +$\\

\multicolumn{3}{r}{$ +[h, \alpha^1_1 e + \alpha^1_2 f + \alpha^1_3 h + \sum\limits_{k=0}^m\beta^1_{k}x_k]\Big)$}\\

&$=$&$\frac{\alpha^1_3} 2 h + \frac{\alpha^1_1+\alpha^2_2} 2 e + \sum\limits_{k=1}^{m-1}\frac{(\beta^1_{k-1} - (k+1)(m-k)\beta^2_{k+1})k(m+1-k)}{4} x_{k-1}+$\\

\multicolumn{3}{r}{$+\frac{m \beta^1_{m-1}}{4} x_{m-1} + \alpha^1_1 e - \alpha^1_2 f +    \sum\limits_{k=0}^m\frac{\beta^1_{k}(m-2k)}{2} x_k$}\\

&$=$&$\frac{3\alpha^1_1+\alpha^2_2} 2 e - \alpha^1_2 f + \frac{\alpha^1_3} 2 h +  \sum\limits_{k=0}^{m-2}\frac{(\beta^1_{k}- (k+2)(m-k-1)\beta^2_{k+2} )(k+1)(m-k)}{4} x_{k}+$\\
\multicolumn{3}{r}{$+\frac {m \beta^1_{m-1}}{4}x_{m-1} +  \sum\limits_{k=0}^m\frac{\beta^1_{k}(m-2k)}{2} x_k.$}
\end{longtable}
}

On the other hand, 
\begin{longtable}{lcllcllcl}
    $\varphi([h,e]) $&$ = $&$2 \varphi(e) $&$=$&$  2\alpha^1_1 e + 2\alpha^1_2 f + 2\alpha^1_3 h + 2\sum\limits_{k=0}^m\beta^1_{k}x_k.$
\end{longtable}
Comparing coefficients at the basis elements we have
$$\alpha^1_1= \alpha^2_2, \quad \alpha^1_2=0, \quad \alpha^1_3=0, \quad \beta_{1, m}=0, \quad \beta^1_{m-1}=0,$$
and (for  $0 \leq k \leq m-2$):
\begin{equation}\label{eq8}\big((k+1)(m-k) +2m -4k -8\big) \beta_{1,k}= (k+2)(k+1)(m-k)(m-k-1)\beta^2_{k+2}.\end{equation}

Let us now consider 
{\small \begin{longtable}{lcl}
$\varphi([f,h]) $&$= $&$
\frac 1 2 \big([\varphi(f), h] + [f, \varphi(h)]\big)$\\
&$=$&$\frac 1 2\big[\alpha^2_1 e + \alpha^1_1 f + \alpha^2_3 h +\sum\limits_{k=0}^m\beta_{2,k}x_k, h\big] +$\\

\multicolumn{3}{r}{$+\frac 1 2 \big[f, -\alpha^2_3 e + \alpha^1_1 h - \frac m 2 \beta^2_{1}x_0  +   
\sum\limits_{k=1}^{m-1}\frac{\beta^1_{k-1} - (k+1)(m-k)\beta^2_{k+1} }{2} x_{k} + \frac{\beta^1_{m-1}}{2}x_{m}\big]$}\\

&$=$&$-\alpha^2_1 e + \alpha^1_1 f +   
\sum\limits_{k=0}^m\frac{(2k-m)\beta_{2,k}}{2}x_k+ \frac{\alpha^2_3} 2 h + \alpha^1_1 f + \frac{m \beta^2_{1}}{4}x_1  -   \sum\limits_{k=2}^{m}\frac{\beta_{1,k-2} - k(m-k+1)\beta^2_{k}}{4} x_{k}.$
\end{longtable}}

On the other hand,
\begin{longtable}{lclclclclc}
$\varphi([f,h])$&$ =$&$ 2 \varphi(f)$&$ = $&$ 2\alpha^2_1 e + 2\alpha^1_1 f + 2\alpha^2_3 h + 2\sum\limits_{k=0}^m\beta^2_{k}x_k.$
\end{longtable}

Comparing coefficients at the basis elements we obtain
$$\alpha^2_1=0, \quad \alpha^2_3=0, \quad \beta_{2, 0}=0, \quad \beta_{2, 1}=0,$$ 
and
\begin{equation}\label{eq9}\beta_{1, k-2} = \big((k-2)m - k^2+5k-8\big)\beta_{2, k}, \quad 2 \leq k \leq m.\end{equation}

Then from \eqref{eq8} and \eqref{eq9} we have that $(m-2)(m+4)\beta_{2, k} =0,$ for $2 \leq k \leq m.$
Since $m\neq 2,$ then we have that $\beta_{2, k} =0,$ which implies $\beta_{1, k-2} =0$ for any  $2 \leq k \leq m.$

Thus, we have 
\begin{longtable}{lcllcllcl}
$\varphi(e) $&$=$&$\alpha^1_1 e,$&
$\varphi(f) $&$=$&$ \alpha^1_1 f,$&
$\varphi(h) $&$=$&$\alpha^1_1 h.$
\end{longtable}

Now put
\begin{longtable}{lcl}
$\varphi(x_0) $&$=$&$ \alpha^4_1 e + \alpha^4_2 f + \alpha^4_3 h + \sum\limits_{k=0}^m\beta^4_k x_k.$
\end{longtable}

From 
\begin{longtable}{lcl}
$0$&$=$&$2\varphi([x_0, e]) = [\varphi(x_0), e] + [x_0, \varphi(e)]  $\\
&$=$&$ [\alpha^4_1 e + \alpha^4_2 f + \alpha^4_3 h + \sum\limits_{k=0}^m\beta^4_k x_k, e] + [x_0, \alpha^1_1 e ]$\\
&$=$&$-\alpha^4_2 h + 2\alpha^4_3 e +
 \sum\limits_{k=1}^m k(m+1-k) \beta^4_k x_{k-1},$
\end{longtable}
we have that $\alpha^4_2 = \alpha^4_3  = \beta^4_k = 0,$ $1 \leq k \leq m.$
Considering the following relation 
\begin{longtable}{lclclcl}
$2m\varphi(x_0) $&$ =$&$ 2\varphi([h, x_0])  =  [\varphi(h), x_0] + [h, \varphi(x_0)] $\\
& $=$ &$[\alpha^1_1 h, x_0] + [h, \alpha^4_1 e + \beta^4_0 x_0]= 2\alpha^4_1 e + m(\alpha^1_1 + \beta^4_0) x_1,$
\end{longtable}
we have that $\alpha^4_1 = 0$ and $\beta^4_0 = \alpha^1_1.$ Thus, $\varphi(x_0) = \alpha^1_1x_0.$ 
Then from $\varphi(x_{k}) = \varphi([x_{k-1}, f]),$ we obtain that
$\varphi(x_k) = \alpha^1_1x_k$ for $1\leq k\leq m.$ 
Hence, any $\frac 1 2$-derivation of $\mathfrak L^m$ is trivial.
\end{proof}

Surprisingly, in the case of $m=2$, we obtain a different result.
\begin{proposition} \label{prop30}
Any $\frac 1 2$-derivation $\varphi$ of $\mathfrak L^2$  
has the following form
\begin{longtable}{lcllcllcl}
$\varphi(e) $&$= $&$\alpha e -2\beta x_0,$ & 
$\varphi(f) $&$= $&$ \alpha f + \beta x_2,$ & 
$\varphi(h) $&$= $&$ \alpha h -2 \beta x_1,$ \\
$\varphi(x_0) $&$= $&$\alpha x_0,$ & $\varphi(x_1) $&$ = $&$ \alpha x_1,$ & $\varphi(x_2) $&$= $&$\alpha x_2.$
\end{longtable}
 \end{proposition}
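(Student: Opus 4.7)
The strategy is to mimic the argument of \cref{prop29}, isolating the single place where the hypothesis $m>2$ was essential and observing that at $m=2$ this obstruction vanishes, producing the one-parameter family in the statement.

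As in \cref{prop29}, write $\varphi(e)$ and $\varphi(f)$ as generic linear combinations of the basis, compute $\varphi(h)$ from $h=[e,f]$ via the $\frac12$-derivation rule, and impose $\varphi([h,e])=2\varphi(e)$ and $\varphi([f,h])=2\varphi(f)$. Coefficient-by-coefficient comparison again forces $\alpha^1_2=\alpha^1_3=\alpha^2_1=\alpha^2_3=0$, $\beta^1_1=\beta^1_{m-1}=\beta^2_0=\beta^2_1=0$ and $\alpha^1_1=\alpha^2_2=:\alpha$, together with the two relations \eqref{eq8} and \eqref{eq9}. For $m=2$ the unique surviving equation among these, at $k=0$ in \eqref{eq8} and $k=2$ in \eqref{eq9}, collapses to the single linear constraint $\beta^1_0=-2\beta^2_2$: the consequence $(m-2)(m+4)\beta^2_k=0$ which was used in \cref{prop29} to kill all remaining $\beta^2_k$ is now vacuous. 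Setting $\beta:=\beta^2_2$ and substituting back into the formula for $\varphi(h)$ from the earlier calculation gives $\varphi(e)=\alpha e-2\beta x_0$, $\varphi(f)=\alpha f+\beta x_2$ and $\varphi(h)=\alpha h-2\beta x_1$, matching the statement.

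To determine $\varphi$ on the radical, parametrize $\varphi(x_0)$ generically and apply the $\frac12$-derivation identity to $[x_0,e]=0$ and $[h,x_0]=2x_0$, exactly as in \cref{prop29}; the extra $-2\beta x_0$ now appearing in $\varphi(e)$ contributes nothing because the radical is abelian, and one still obtains $\varphi(x_0)=\alpha x_0$. The values $\varphi(x_1)$ and $\varphi(x_2)$ then follow from $x_1=[x_0,f]$ and $x_2=[x_1,f]$ together with the $\frac12$-derivation identity, with the $\beta x_2$ term of $\varphi(f)$ dropping out in both computations since $[x_0,x_2]=[x_1,x_2]=0$. A final routine check of the remaining defining brackets of $\mathfrak L^2$ confirms that the displayed family is indeed a $\frac12$-derivation; each identity reduces to an elementary computation using the abelianness of the radical. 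The main obstacle is purely bookkeeping: tracking which coefficient comparisons that were restrictive for $m>2$ degenerate at $m=2$ and reading off the correct one-parameter deformation $\beta$.
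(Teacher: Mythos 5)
Your proof is correct and follows exactly the route the paper intends: the paper states \cref{prop30} without proof, and your argument is the straightforward specialization of the paper's proof of \cref{prop29} to $m=2$, where the obstruction $(m-2)(m+4)\beta_{2,k}=0$ becomes vacuous and \eqref{eq8}, \eqref{eq9} both collapse to the single relation $\beta^1_0=-2\beta^2_2$, with the radical computation unchanged. One trivial slip: in your list of vanishing coefficients, $\beta^1_1$ should read $\beta^1_m$ (the paper's $\beta_{1,m}=0$); at $m=2$ you need both $\beta^1_{m-1}=\beta^1_1=0$ and $\beta^1_m=\beta^1_2=0$ to exclude an $x_2$-component in $\varphi(e)$, and both do follow from the coefficient comparison you invoke.
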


From the description of the $\frac 1 2$-derivation of the algebra $\mathfrak{L}^m$ we have the following corollary. 
\begin{corollary} There are no non-trivial transposed Poisson algebra structures defined on the algebra $\mathfrak L^m.$  \end{corollary}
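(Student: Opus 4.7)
The plan is to handle the two cases $m>2$ and $m=2$ separately, in each case invoking Lemma \ref{glavlem}, which identifies transposed Poisson structures on a Lie algebra with those commutative associative products whose left-multiplication operators are $\frac{1}{2}$-derivations.

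For $m>2$, the assertion is immediate: by Proposition \ref{prop29} every $\frac{1}{2}$-derivation of $\mathfrak{L}^m$ is trivial, so Theorem \ref{princth} applies directly.

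For $m=2$, the situation is more delicate since Proposition \ref{prop30} produces a genuine two-parameter family $(\alpha,\beta)$ of $\frac{1}{2}$-derivations. By Lemma \ref{glavlem}, for each basis element $v\in\{e,f,h,x_0,x_1,x_2\}$ the left multiplication $\varphi_v(w)=v\cdot w$ is a $\frac{1}{2}$-derivation, hence is determined by some pair of scalars $(\alpha_v,\beta_v)$ via Proposition \ref{prop30}. The approach is then to impose commutativity, that is, $\varphi_v(w)=\varphi_w(v)$ for each pair of basis elements, and compare coefficients. The crucial feature of Proposition \ref{prop30} is that the non-trivial $\beta$-term sends the $\mathfrak{sl}_2$-part $\{e,f,h\}$ into the radical part $\mathfrak{r}=\langle x_0,x_1,x_2\rangle$ and vanishes on $\mathfrak{r}$ itself. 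Thus identities such as $e\cdot x_1=x_1\cdot e$ and $f\cdot x_2=x_2\cdot f$ immediately force $\alpha_e=\alpha_f=\alpha_h=0$ and $\alpha_{x_i}=\beta_{x_i}=0$, and then $e\cdot f=f\cdot e$ together with $e\cdot h=h\cdot e$ force $\beta_e=\beta_f=\beta_h=0$. Hence every $\varphi_v$ vanishes, the product is identically zero, and there is no non-trivial transposed Poisson structure.

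The main obstacle is organising the case $m=2$: the scalars $(\alpha_v,\beta_v)$ satisfy an overlapping system of linear relations coming from commutativity, and one must choose a judicious order of commutativity identities so that each one pins down a new parameter. The structural observation above, that $\beta$-contributions shift $\mathfrak{sl}_2$-components into $\mathfrak{r}$-components whereas multiplication by elements of $\mathfrak{r}$ is purely scalar, is precisely what makes this propagation possible.
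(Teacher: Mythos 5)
Your proposal is correct and follows essentially the same route as the paper: for $m>2$ it invokes Proposition \ref{prop29} together with Theorem \ref{princth}, and for $m=2$ it parametrizes each multiplication operator $\varphi_v$ by $(\alpha_v,\beta_v)$ via Proposition \ref{prop30} and lets commutativity kill all parameters, which is exactly the paper's argument. The only minor caveat is that your sample identity $f\cdot x_2=x_2\cdot f$ by itself gives $\alpha_f=\beta_{x_2}$ rather than $\alpha_f=0$, so one needs (as you anticipate) a slightly different ordering of the commutativity relations, e.g.\ using $f\cdot x_0$ and $h\cdot x_2$ as in the paper.
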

\begin{proof} Since in the case of $m\neq 2,$  the algebra $\mathfrak L^m$ has only trivial $\frac 1 2$-derivation,  we obtain that there are no non-trivial transposed Poisson algebra structures for $m\neq 2.$ 

Thus, it is sufficient to prove the Corollary for $m=2.$
Then we have the multiplication of $\mathfrak L^2$ has the form 
\begin{longtable}{lcllcllcl} 
$[e,f] $&$=$&$h,$ & $[h,e]$&$=$&$2e,$ & $[f,h]$&$=$&$2f,$ \\
$[x_0,h]$&$=$&$-2x_0,$ & $[x_0,f]$&$=$&$x_1,$ & $[x_1,e]$&$=$&$2x_0,$ \\
$[x_2,h]$&$=$&$2x_2,$ & $[x_1,f]$&$=$&$x_2,$ & $ [x_2,e]$&$=$&$2x_1.$ \\
\end{longtable}

By Lemma \ref{glavlem},  we have that for any element   
$\ell \in \{e, f, h, x_0, x_1, x_2\}$, there is a related 
$\frac{1}{2}$-derivation $\varphi_{\ell}$ of $\mathfrak L^2$.
Thus, by Proposition \ref{prop30}, we derive that
$$\begin{array}{lll} \varphi_{\ell}(e) = \alpha_{\ell} e -2\beta_{\ell} x_0, & \varphi_{\ell}(f) = \alpha_{\ell} f + \beta_{\ell} x_2, & \varphi_{\ell}(h) = \alpha_{\ell} h -2 \beta_{\ell} x_1, \\[1mm]
\varphi_{\ell}(x_0) = \alpha_{\ell} x_0, & \varphi_{\ell}(x_1) = \alpha_{\ell} x_1, & \varphi_{\ell}(x_2) = \alpha_{\ell} x_2.\end{array}$$
 
Then from 
\begin{longtable}{lcllcllcllcl}
$\alpha_{x_i} x_j $&$= $&$\varphi_{x_i}(x_j)$&$ =$&$ x_i \cdot x_j $&$= $&$\varphi_{x_j}(x_i) = \alpha_{x_j} x_i,$
\end{longtable}
we have that $\alpha_{x_0}=\alpha_{x_1} = \alpha_{x_2} =0.$ 

Now, considering 
\begin{longtable}{lclclclclclclcl}
$\alpha_{e} x_1 $&$=$&$ \varphi_{e}(x_1) $&$=$ &$e \cdot x_1 $&$= $&$\varphi_{x_1}(e) $&$=$&$ -2\beta_{x_1} x_0,$\\ 
$\alpha_{f} x_0 $&$=$&$ \varphi_{f}(x_0) $&$=$&$ f \cdot x_0 $&$=$&$ \varphi_{x_0}(f) $&$=$&$ \beta_{x_0} x_2,$\\ 
$\alpha_{h} x_2 $&$=$&$ \varphi_{h}(x_2) $&$=$&$ h \cdot x_2 $&$=$&$ \varphi_{x_2}(h) $&$=$&$ -2\beta_{x_2} x_1,$
\end{longtable}
we obtain that $\alpha_{e}=\alpha_{f} = \alpha_{h} = \beta_{x_0} = \beta_{x_1} = \beta_{x_2}=0.$

Finally, from
\begin{longtable}{lclclclclclclcl}
$\beta_{e} x_2$ &$=$& $\varphi_{e}(f)$ &$=$ &$e \cdot f$ &$=$ &$\varphi_{f}(e) $&$=$&$ -2\beta_{f} x_0,$\\
$-2\beta_{e} x_1 $&$=$&$ \varphi_{e}(h) $&$= $&$e \cdot h $&$= $&$\varphi_{h}(e) $&$=$&$ -2\beta_{h} x_0,$
\end{longtable}
we get that $\beta_{e} = \beta_{f} = \beta_{h}=0.$ Thus, we conclude that there are no non-trivial transposed Poisson algebra structures on $\mathfrak L^2$.
\end{proof}

\begin{remark} It should be noted that the algebra $\mathfrak L^2$ is an algebra that admits non-trivial $\frac{1}{2}$-derivations, but
does not admit non-trivial transposed Poisson algebra structures.
Moreover, the algebra $\mathfrak L^2$ is a perfect algebra 
(i.e., $[\mathfrak L^2,\mathfrak L^2] =\mathfrak L^2$) 
and  admits non-trivial $\frac{1}{2}$-derivations.
The existence of algebras with this property gives a particular answer to \cite[Questions 2 and 5]{bfk23}.
    
\end{remark}


\begin{thebibliography}{99}


 

\bibitem{said2}
 Albuquerque H., Barreiro E., Benayadi S., Boucetta M., Sánchez J.M.,
 Poisson algebras and symmetric Leibniz bialgebra structures on oscillator Lie algebras,
 Journal of Geometry and Physics, 160 (2021), 103939.

 

\bibitem{bai20}
 Bai C., Bai R., Guo L., Wu Y.,
Transposed Poisson algebras, Novikov-Poisson algebras, and 3-Lie algebras,  Journal of Algebra, 632 (2023), 535--566. 

\bibitem{angel}
Ballesteros A., Herranz F., 
Lie bialgebra quantizations of the oscillator algebra and their universal $R$-matrices, 
Journal of Physics A: Mathematical and Theoretical, 29 (1996), 15, 4307--4320.  

 
\bibitem{bfk23}
Beites P.,  Fern\'andez Ouaridi A., Kaygorodov I.,
The algebraic and geometric classification of  
transposed Poisson algebras, Revista de la Real Academia de Ciencias Exactas, Físicas y Naturales. Serie A. Matemáticas,  117 (2023), 2,    55.
 
 	\bibitem{bfk22}
		Beites P., 
		Ferreira B. L. M., 
		Kaygorodov I.,  
		Transposed Poisson structures,  arXiv:2207.00281.

	 

\bibitem{biggs}
Biggs R., Remsing C., 
Some remarks on the oscillator group, 
Differential Geometry and its Applications, 35 (2014), suppl., 199--209.


\bibitem{bm11}
Boucetta M.,  Medina A., 
Solutions of the Yang-Baxter equations on quadratic Lie groups: the case of oscillator groups, 
Journal of Geometry and Physics, 61 (2011), 12, 2309--2320.


\bibitem{cal}
Calvaruso G., Zaeim A., 
    On the symmetries of the Lorentzian oscillator group, 
    Collectanea Mathematica, 68 (2017), 1, 51--67.


 

\bibitem{crampe}
Crampé N., van de Vijver W.,  Vinet L., 
Racah problems for the oscillator algebra, the Lie algebra $\mathfrak{sl}_n$, and multivariate Krawtchouk polynomials, 
Annales Henri Poincaré, 21 (2020), 12, 3939--3971.
 

 
 \bibitem{dzhuma}
 Dzhumadil'daev  A.,
Weak Leibniz algebras and transposed Poisson algebras,
arXiv:2308.15018.  

 
 \bibitem{fer23}
 Fernández Ouaridi A.,
On the simple transposed Poisson algebras and Jordan superalgebras,
arXiv:2305.13848.   
 
 
 
\bibitem{FKL}
Ferreira B. L. M., Kaygorodov I., Lopatkin V., $\frac{1}{2}$-derivations of Lie algebras and transposed Poisson algebras, 
Revista de la Real Academia de Ciencias Exactas, Físicas y Naturales. Serie A. Matemáticas, 115 (2021), 3, 142. 
 
 
 	\bibitem{jac62}
Jacobson N.,  
Lie algebras, 
Interscience Tracts in Pure and Applied Mathematics, No. 10. Interscience Publishers (a division of John Wiley $\&$ Sons, Inc.), New York-London, 1962. ix+331 pp.

 
 	\bibitem{jawo}
		Jaworska-Pastuszak A., Pogorzały Z.,
		Poisson structures for canonical algebras,
		Journal of Geometry and Physics, 148 (2020), 103564.
		

\bibitem{kk21}
Kaygorodov I., Khrypchenko M., 
    Poisson structures on finitary incidence algebras, 
    Journal of  Algebra, 578 (2021), 402--420.
 
\bibitem{kk22}
Kaygorodov I., Khrypchenko M., 
		Transposed Poisson structures on Block Lie algebras and superalgebras, 
 Linear Algebra and Its Applications,
   656 (2023), 167--197.


\bibitem{kk23}
Kaygorodov I., Khrypchenko M., 
Transposed Poisson structures on Witt-type algebras,
 Linear Algebra and its Applications,    665  (2023),  196--210.	


\bibitem{kkg23}
Kaygorodov I., Khrypchenko M., 
Transposed Poisson structures on generalized Witt algebras and Block Lie algebras,    Results in Mathematics, 78 (2023),  5,  186.



\bibitem{KK7}
Kaygorodov I.,  Khrypchenko M.,
Transposed Poisson structures on the Lie algebra of upper triangular matrices, arXiv:2305.00727.


\bibitem{kkinc}
Kaygorodov I.,  Khrypchenko M.,
Transposed Poisson structures on   Lie incidence algebras, arXiv:2309.00332. 


\bibitem{klv22}
Kaygorodov I., Lopatkin V., Zhang Z.,
    Transposed Poisson structures on Galilean and solvable Lie algebras,  Journal of Geometry and Physics,
  187    (2023), 104781.


 

\bibitem{medina85}
Medina A.,  
Groupes de Lie munis de métriques bi-invariantes, 
Tohoku Mathematical Journal (2), 37 (1985), 4, 405--421.
  


   \bibitem{nw94}
Ndogmo J.,  Winternitz P., 
Solvable Lie algebras with abelian nilradicals, Journal of Physics A: Mathematical and Theoretical,   27 (1994),  2, 405--423.

   \bibitem{rw93}
Rubin J.,   Winternitz P., 
Solvable Lie algebras with Heisenberg ideals,
Journal of Physics A: Mathematical and Theoretical, 
 26 (1993), 5, 1123--1138.

   \bibitem{kms}
Sartayev B., 
Some generalizations of the variety of transposed Poisson algebras,
Communications in Mathematics, 32 (2024),  2, 54--61.
  
\bibitem{sw05} 
Šnobl L.,   Winternitz P., 
A class of solvable Lie algebras and their Casimir invariants,   
Journal of Physics A: Mathematical and Theoretical, 
38 (2005),  12, 2687--2700.



\bibitem{ytk} 
Yang Ya.,  Tang X.,  Khudoyberdiyev A.,	
Transposed Poisson structures on Schrodinger algebra in $(n+1)$-dimensional space-time, arXiv:2303.08180.

	\bibitem{YYZ07}
		Yao Y., Ye Y., Zhang P., 
		Quiver Poisson algebras, 
		Journal of  Algebra, 312 (2007), 2, 570--589.


    \bibitem{yh21}
Yuan L.,  Hua Q., 
$\frac{1}{2}$-(bi)derivations and transposed Poisson algebra structures on Lie algebras,
 Linear and Multilinear Algebra, 70 (2022),   22, 7672--7701.
 
  

\bibitem{fil1}  Zusmanovich P.,  
  On $\delta$-derivations of Lie algebras and superalgebras, Journal of Algebra, 324 (2010),  12, 3470--3486.
		
		
 
   
\end{thebibliography}
\end{document}